\def\h{\hbox}
\def\<{\langle}
\def\>{\rangle}
\numberwithin{equation}{section}
\def\<{\langle}   
\def\>{\rangle}
\def\ra{\rightarrow}
\def\p{\partial}
\def\a{\alpha}
\def\g{\gamma}
\def\B{\mathcal B}
\def\O{\Omega}
\def\PP{{\mathbb P}}
\def\BB{{\mathbb B}}
\def\F{\mathcal F}
\def \sm{\setminus}
\def\-{\overline}
\def\o{\omega}
\def\ov{\overline}
\def\h{\hbox}
\def\d{\delta}
\def\d{\delta}
\def\d{\delta}
\def\b{\beta}
\def\a{\alpha}
\def\CC{{\Bbb C}}
\def\NN{{\Bbb N}}
\def\BB{{\Bbb B}}
\def\ov{\overline}
\def\ld{\lambda}
\def\O{\Omega}
\def\o{\omega}
\def\sm{\setminus}
\def\h{\hbox}
\def\wt{\widetilde}
\def\ra{\rightarrow}
\def\p{\partial}
\def\zbar{\ov z}
\def\d{\delta}
\def\a{\alpha}
\def\d{\delta}
\def\wbar{\overline{w}}
\def\Ol{\overline}
\def\d{\delta}
\def\b{\beta}
\def\dbar{\ov\partial}
\def\a{\alpha}
\def\a{\alpha}
\def\g{\gamma}
\newtheorem{theorem}{Theorem}[section]
\newtheorem{lemma}[theorem]{Lemma}
\newtheorem{corollary}[theorem]{Corollary}
\newtheorem{conjecture}[theorem]{Conjecture}
\date{\ }
\begin{document}
\title{\bf   Bergman metrics with constant holomorphic sectional curvatures}
\author{{Xiaojun  Huang}\footnote{Supported in part by  DMS-2247151}
\qquad Song-Ying Li} \maketitle
\centerline{\bf With an Appendix by John N. Treuer\footnote{Supported in part by DMS-2247175 and an AMS-Simons Travel Grant}}

\bigskip\bigskip
 \centerline{Dedicated to the memory
of Professor Joe  Kohn}

\bigskip

\begin{abstract}\vskip 3mm\footnotesize
\noindent  The paper studies complex manifolds whose Bergman metrics are incomplete but 
have constant holomorphic sectional curvature. Among   results
obtained in this paper is the proof of a long standing folklore conjecture that a Stein manifold has   a negative constant holomorphic sectional curvature  if and only if it is biholomorphic to a  ball with a   pluri-polar set  removed.  Moreover   we 
construct  a  domain in $\CC^2$  whose Bergman metric is  well-defined and  has a positive constant 
holomorphic sectional curvature, which appears to be the first example of this kind. In  Section 5  and the  Appendix provided by John Treuer,  it will be shown  that under natural assumptions, there is no complex manifold whose Bergman metric is flat.

\medskip

Abstract : Cet article  \'etudie les vari\'et\'es complexes dont la m\'etrique de Bergman est non compl\'ete mais  \'a courbure sectionnelle holomorphe constante. Parmi les r\'esultats obtenus dans cet article, nous donnons une preuve d’une conjecture folklorique de longue date et nous construisons un domaine dont la m\'etrique de Bergman a \'etonnamment une courbure sectionnelle holomorphe constante et positive.

\vskip 4.5mm

\noindent {\bf 2000 Mathematics Subject Classification:}  32Q05, 32Q10, 32Q30, 53B35,
53C24
\end{abstract}

\section{Introduction}
In this paper, we study  when a complex manifold has a well defined Bergman metric  with  constant holomorphic  sectional  curvature. Bergman metrics and Bergman kernels  are among the most
important objects to study in  complex analysis and geometry. Studies on
Bergman metrics  and Bergman kernels of complex manifolds date back to the work of
Stefan Bergman \cite{Ber1} and S. Kobayashi \cite{Ko}. There are
well-known classical results due to S. Bergman \cite{Ber2}, S.
Kobayashi \cite{Ko} and Q. Lu \cite{Lu}, C. Fefferman \cite{Fe},
etc.
 Bergman showed  in \cite{Ber2} that the
holomorphic sectional curvature of the  Bergman metric of a bounded
domain in $\CC^n$ is bounded  from above by the one for projective spaces, i.e,
is bounded from above by $+2$. Kobayashi generalized this upper bound
estimate  to complex manifolds in \cite{Ko}.  Lu proved in \cite{Lu}
that a bounded domain $\O$ in $\CC^n$ with a complete Bergman metric (such
an $\O$ is then pseudoconvex by a result in \cite{Bre}) has a
constant holomorphic sectional curvature if and only if $\O$ is
biholomorphic to the ball. Fefferman \cite{Fe} studied the boundary asymptotic expansion of the Bergman kernel function  of  a smoothly bounded strongly pseudoconvex domain.  In particular, using the expansion of Fefferman,  it is proved in \cite{Kle}  that the holomorphic sectional curvature of the Bergman metric of a smoothly bounded strongly pseudoconvex domain  is asymptotically negative constant near its boundary.
Fu \cite{Fu}  showed that the holomorphic sectional curvature  of the Bergman metric of a smoothly bounded   Reinhardt domain of finite type in $\CC^2$ is bounded from above by a negative constant near its boundary. 

Bergman metrics    have been extensively
studied for almost a century  since the foundational work of Stefan
Bergman. Still, our understanding is far from being complete and any
new  discovery related to these metrics serves as an important
resource of motivation for a future investigation. In this paper, we
will provide   examples   revealing new phenomena such as the
construction of a  domain with constant positive holomorphic
sectional curvature for its Bergman metric. Previously, all examples of complex manifolds with constant holomorphic sectional curvature $\kappa$  for their Bergman metrics are such that $\kappa<0$. We will  answer a
long-standing folklore conjecture on characterizing Stein manifolds
whose Bergman metrics  have a negative constant holomorphic 
sectional curvature without assuming the metric is complete, as opposed to the classical theorem of Lu. We  formulate  naturally related
new conjectures on Bergman metrics.

More precisely,  we first study the question of when does a complex
manifold have a positive constant holomorphic sectional curvature
for its Bergman metric?  Along these lines,  our results in $\S 3$ confirm in particular the following:

\begin{theorem}\label{thm6.1}
Let $\Omega$ be a  complex manifold of dimension $n$. Suppose its
Bergman space $A^2(\O)$ is base point free and
separates holomorphic directions. Suppose that the Bergman metric of
$\O$ has positive constant holomorphic sectional curvature.  Then
$A^2(\O)$ is of finite dimension. Moreover, $\O$ is biholomorphic to
a  domain in ${\mathbb P}^n$.
\end{theorem}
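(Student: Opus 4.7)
My plan is to study $\O$ through its Kodaira-type Bergman map. Fix an orthonormal basis $\{f_j\}_{j \geq 0}$ of $A^2(\O)$, where the index set is finite or countable. Since $A^2(\O)$ is base point free,
\[
\Phi(z) := [f_0(z) : f_1(z) : \cdots]
\]
defines a holomorphic map from $\O$ into $\PP^N$, with $N + 1 = \dim A^2(\O)$; here $\PP^\infty$ is understood as the projective space of a separable Hilbert space when the dimension is infinite. Using the standard formula $K(z,z) = \sum_j |f_j(z)|^2$, the Bergman K\"ahler form $\o_B = i\partial\bar\partial \log K(z,z)$ equals the pullback $\Phi^*\o_{FS}$, where $\o_{FS}$ is the Fubini-Study form on $\PP^N$ with K\"ahler potential $\log \sum_j |Z_j|^2$. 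The assumption that $A^2(\O)$ separates holomorphic directions then forces $\Phi$ to be a holomorphic immersion.

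The core of the argument is Calabi's rigidity theorem for holomorphic isometric immersions of K\"ahler manifolds into complex projective space. In our setting the Fubini-Study metric has constant holomorphic sectional curvature $+2$, while $(\O, \o_B)$ has constant positive holomorphic sectional curvature $c > 0$ by hypothesis. The Gauss equation for K\"ahler immersions yields $c \leq 2$, with the pointwise defect $(2-c)|v|^4$ equal to the squared norm of the second fundamental form on $v \otimes v$. Calabi's classification of holomorphic isometric immersions of complex space forms into $\PP^N$ (valid also when $N$ is infinite) then forces $c = 2/k$ for some positive integer $k$ and shows that, up to a unitary transformation of the ambient projective space, $\Phi$ factors through the degree-$k$ Veronese embedding $V_k : \PP^n \hookrightarrow \PP^{\binom{n+k}{k} - 1}$. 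In particular the image of $\Phi$ lies in a finite-dimensional linear subspace of $\PP^N$, giving $\dim A^2(\O) \leq \binom{n+k}{k} < \infty$ and establishing the first assertion.

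The factorization $\Phi = V_k \circ \psi$ produces a holomorphic map $\psi : \O \to \PP^n$ between complex manifolds of the same dimension; since $V_k$ is a closed embedding and $\Phi$ is an immersion, $\psi$ is automatically a local biholomorphism onto an open subset of $\PP^n$. The remaining task is to upgrade $\psi$ to a global biholomorphism onto a genuine \emph{domain} in $\PP^n$, i.e.\ to establish injectivity of $\psi$, and I expect this to be the main technical hurdle. It should follow from the rigidity in Calabi's classification together with the intrinsic, canonical nature of the Bergman map: if $\psi(p) = \psi(q)$ for distinct $p, q \in \O$, then every $f \in A^2(\O)$ satisfies $f(p) = \lambda f(q)$ for a common scalar $\lambda$, so that $K(\cdot, p)$ and $K(\cdot, q)$ are proportional as elements of $A^2(\O)$; combined with the real-analyticity of the Bergman kernel and an analytic continuation argument using the local-isometry structure coming from constant curvature, this proportionality should force $p = q$.
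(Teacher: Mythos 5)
Your overall strategy is the same as the paper's: realize the Bergman--Bochner map as a holomorphic isometric immersion into projective space, invoke Calabi's integrality and rigidity results to force the curvature constant to equal $2/k$ for an integer $k$ and to factor the map through a degree-$k$ Veronese embedding, and deduce finite dimensionality from the fact that the (linearly independent) components of the Bergman map are confined to a finite-dimensional linear span; the paper packages this last step as D'Angelo's lemma and obtains the integrality of the curvature constant by the elementary observation that a non-integer power $(1+|z|^2)^{\ld}$ has a negative Taylor coefficient. That half of your argument is sound, modulo one routine point you should make explicit: Calabi rigidity is a priori local, so the factorization $\Phi=U\circ V_k\circ\psi$ is first obtained on a coordinate ball and must be propagated to all of $\O$; this is easy (the preimage under $U^{-1}\circ\Phi$ of the closed subvariety $V_k(\PP^n)$ is analytic and contains an open set, hence is all of $\O$), whereas the paper runs an extension-along-curves argument and checks single-valuedness against the globally defined Bergman--Bochner map.

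The genuine gap is the injectivity of $\psi$, which you correctly flag as the main hurdle but whose proposed resolution does not close. If $\psi(p)=\psi(q)$ then $\Phi(p)=\Phi(q)$, and this is \emph{equivalent} to the proportionality $K(\cdot,p)=\bar{\lambda}\,K(\cdot,q)$: it is exactly the equality case of Cauchy--Schwarz for the vectors $K(\cdot,p)$, $K(\cdot,q)$ in $A^2(\O)$, i.e.\ the vanishing of the Calabi diastasis between $p$ and $q$. So the proportionality you derive carries no information beyond the hypothesis $\Phi(p)=\Phi(q)$, and real-analyticity of $K$ alone cannot rule it out; an immersive Bergman--Bochner map need not separate points without a further global argument. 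What is actually needed is the monodromy argument of the paper's Lemma \ref{one-to-one}: join $p$ to $q$ by a curve, continue the germ of the local inverse isometry (from a Fubini--Study ball in $\PP^n$ back into $\O$) along finitely many overlapping metric balls, use uniqueness of germs of isometries together with Calabi rigidity to see that the identity ${\mathcal B}\circ\Phi_j^{-1}=[\wt{L}]\circ\Psi$ persists with the \emph{same} $[\wt{L}]$ throughout the continuation, and then conclude $p=q$ from the injectivity of the Veronese map $\Psi$. Without this step (or an equivalent substitute) you only obtain a locally biholomorphic, possibly non-injective map $\O\to\PP^n$, which does not yield the stated conclusion that $\O$ is biholomorphic to a domain in $\PP^n$.
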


Here $A^2(\Omega)$  is the Bergman space of $\Omega$ consisting of the linear space of $L^2$-integrable holomorphic $n$-forms. The other conditions on $A^2(\Omega)$ in the theorem will be explained in the next section, which   are needed to have a well-defined Bergman metric on $\Omega$.  In particular, we have the following:

\begin{corollary}
Let $\Omega$ be a  bounded domain in ${\mathbb C}^n$. Then its
Bergman metric cannot have a positive constant holomorphic
sectional curvature.  On the other hand,  there does exist an unbounded Reinhardt domain whose Bergman metric has a positively constant holomorphic sectional curvature.
\end{corollary}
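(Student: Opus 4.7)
The first assertion is an immediate consequence of Theorem \ref{thm6.1}. If $\O\ss\CC^n$ is a bounded domain, then every holomorphic polynomial $p$ is square integrable over $\O$, so the holomorphic $n$-form $p(z)\,dz_1\wedge\cdots\wedge dz_n$ lies in $A^2(\O)$; in particular $A^2(\O)$ is infinite dimensional. The constants and coordinate functions also show that $A^2(\O)$ is base point free and separates holomorphic directions, so the hypotheses of Theorem \ref{thm6.1} hold. Were the Bergman metric of $\O$ to carry a positive constant holomorphic sectional curvature, Theorem \ref{thm6.1} would force $\dim A^2(\O)<\infty$, contradicting the preceding paragraph.

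For the existence assertion, the plan is to exhibit an unbounded Reinhardt domain $\O\ss\CC^2$ whose Bergman kernel function in the standard coordinates is a positive constant multiple of
\[
(1+|z_1|^2+|z_2|^2)^{3}.
\]
If this is achieved, then its Bergman metric equals $3\,i\,\p\dbar\log(1+|z_1|^2+|z_2|^2)$, a positive multiple of the Fubini--Study metric of the standard affine chart of $\PP^2$, and hence has a positive constant holomorphic sectional curvature. Since invariance under the torus action diagonalises the Bergman kernel of a Reinhardt domain in the monomial basis,
\[
K_\O(z,\bar z) \;=\; \sum_{\alpha\,:\,z^\alpha\in A^2(\O)}\; \frac{|z^\alpha|^2}{\|z^\alpha\|_{A^2}^2},
\]
matching this expression with the multinomial expansion of $(1+|z_1|^2+|z_2|^2)^{3}$ reduces the task to selecting $\O$ so that the square integrable monomials are exactly the ten monomials $\{z_1^{b}z_2^{c}: b,c\geq 0,\, b+c\leq 3\}$, and so that their squared $A^2$-norms are inversely proportional to the multinomial coefficients $3!/(b!\,c!\,(3-b-c)!)$.

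The principal difficulty is realising simultaneously the finiteness of $A^2(\O)$ and the exact combinatorial ratios among the ten integrals $\int_\O|z^\alpha|^2\,dV$. Finiteness is controlled by the recession directions of the logarithmic image $\log|\O|\ss\RR^2$, while the actual values of the integrals depend on the precise shape of $\O$ along those directions, and these two features interact rigidly. One therefore anticipates $\O$ to be defined by an inequality mixing monomial and polynomial factors, of roughly the form $|z_1|^{a_1}|z_2|^{a_2}\,P(|z_1|^2,|z_2|^2)<1$ with $P$ carefully tuned, so that both the correct set of monomials becomes square integrable and the resulting $A^2$-norms match the Fubini--Study multinomial weights. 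The explicit choice of $\O$ and the verification of these properties are carried out in Section 3.
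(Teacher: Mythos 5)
Your argument for the first assertion is correct and is exactly the paper's: boundedness puts all polynomials in $A^2(\O)$, so the Bergman space is infinite dimensional, base point free and separates holomorphic directions, and Theorem \ref{thm6.1} then forbids positive constant holomorphic sectional curvature.

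The second assertion, however, is not proved in your proposal; what you give is a plan whose hardest step you explicitly leave open. You aim for a Reinhardt domain whose kernel is a constant multiple of $(1+|z_1|^2+|z_2|^2)^{3}$, which forces $A^2(\O)$ to consist of exactly the ten monomials of degree at most $3$ \emph{and} forces the ten squared norms $\int_\O|z^\alpha|^2\,dV$ to be inversely proportional to the multinomial coefficients $3!/(b!\,c!\,(3-b-c)!)$. You correctly identify that the recession geometry of $\log|\O|$ controls which monomials are integrable while the exact values of the integrals depend on the fine shape of $\O$, and that these interact rigidly — but you never produce a domain resolving this tension, so the existence claim is unproven. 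The paper's construction sidesteps the difficulty entirely by taking the exponent to be $1$ rather than $3$: for $D(\alpha)=\{||w|^2-|z|^2|<(1+|z|^2+|w|^2)^{-\alpha}\}$ with $2<\alpha<3$, one checks that the square-integrable monomials are exactly $1,z,w$, so the kernel is automatically of the form $c_0(1+c_1|z|^2+c_1|w|^2)$ (the equality of the two first-order norms coming for free from the $z\leftrightarrow w$ symmetry), and the linear rescaling $(z',w')=(\sqrt{c_1}z,\sqrt{c_1}w)$ turns the potential into the Fubini--Study potential with no constraint whatsoever on the value of $c_1$. In other words, once $A^2$ is spanned by the degree-$\le 1$ monomials, \emph{any} norms yield a Fubini--Study metric after a linear change of coordinates; the combinatorial matching problem you flag as the principal difficulty only arises because you insisted on exponent $3$, and avoiding it is the whole point of the example. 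To repair your proof, either carry out your tuning of $P$ explicitly (which would require solving a genuinely overdetermined-looking moment problem), or switch to the degree-$1$ target and verify, as the paper does, that $1,z,w\in A^2(D(\alpha))$ while $z^pw^q\notin A^2(D(\alpha))$ for $p+q\ge 2$.
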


Motivated by an old conjecture of  J. Wiegerinck, we formulate a new conjecture that  a Stein manifold, in particular, a pseudoconvex  domain
in $\CC^n$,  cannot have  
 its  Bergman metric positively   constantly curved.  (See Conjecture \ref{conj1}).

We then study the case where the Bergman metric has a negative constant  holomorphic sectional
curvature.  We will prove the following in $\S 4$:

\begin{theorem}\label{thm5.2}
Let $\Omega$ be a Stein manifold of complex dimension $n\ge 1$.
Assume its Bergman space $A^2(\Omega)$ is base point free and
separates holomorphic directions.
 Then its Bergman metric   has a negative constant holomorphic sectional
curvature if and only if it is biholomorphic to the unit ball in $\CC^n$ with possibly  a closed pluripolar set removed.
\end{theorem}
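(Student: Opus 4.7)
If $\Omega\cong\BB^n\setminus P$ with $P\subset\BB^n$ closed pluripolar, then $P$ lies in the $-\infty$ locus of a plurisubharmonic function and has zero $2n$-dimensional Lebesgue measure, so the standard $L^2$-removability theorem for holomorphic $n$-forms gives an isometric isomorphism $A^2(\BB^n)\to A^2(\BB^n\setminus P)$ by restriction. Hence the Bergman kernels and Bergman metrics coincide, and the latter inherits from $\BB^n$ the constant holomorphic sectional curvature $-2/(n+1)$.

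\textbf{The ``only if'' direction, local step.} The constant-holomorphic-sectional-curvature condition applied to the Bergman metric $g_{i\bar j}=\p_i\p_{\bar j}\log K$ forces the K\"ahler--Einstein identity $\p\dbar\bigl[\log\det(g_{i\bar j})+\tfrac{n+1}{2}\kappa\log K\bigr]=0$. By Calabi's diastasis rigidity for K\"ahler metrics of constant holomorphic sectional curvature, each point of $\Omega$ admits a neighborhood $U$ and a local holomorphic embedding $F_U:U\hookrightarrow\BB^n$ pulling back the Bergman metric of $\BB^n$ to that of $\Omega$ on $U$; in particular the value of $\kappa$ is forced to be $-2/(n+1)$.

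\textbf{Global extension and pluripolar complement.} The base-point-free, direction-separating hypothesis provides a global Kodaira-type embedding $\iota:\Omega\hookrightarrow\PP(A^2(\Omega)^{*})$; coupled with the single-valuedness of $K(z,w)$ on $\Omega\times\Omega$, this forces the local maps $F_U$ to glue into a globally defined holomorphic map $F:\Omega\to\BB^n$, unique up to a fixed element of $\mathrm{Aut}(\BB^n)$. Injectivity of $F$ follows from the separating property together with $F^{\ast}K_{\BB^n}=K_\Omega$, so $F$ is a biholomorphism onto the open set $V:=F(\Omega)\subset\BB^n$. The identity of Bergman kernels translates into the fact that restriction yields an isometric isomorphism $A^2(\BB^n)\to A^2(V)$, and a Wiegerinck-type $L^2$-removability theorem then identifies the closed complement $\BB^n\setminus V$ as pluripolar.

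\textbf{Main obstacle.} The crux is the global extension step. In Lu's classical theorem, completeness of the Bergman metric powered an exponential-map argument that produced the global isometry directly; here, with completeness dropped, one must rely on the global single-valuedness of $K(z,w)$ on $\Omega\times\Omega$ together with the Stein hypothesis to trivialize the cocycle of $\mathrm{Aut}(\BB^n)$-glueings. The role of Stein-ness is precisely to kill monodromy of the developing map, as witnessed by non-Stein ball-quotient examples which satisfy the local condition but are not biholomorphic to $\BB^n$ minus a pluripolar set. A secondary subtlety is the pluripolarity step, which depends on the nontrivial sharp $L^2$-removability theorem identifying closed $A^2$-removable subsets of $\BB^n$ with closed pluripolar subsets.
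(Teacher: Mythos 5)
Your overall architecture (local uniformization by constant curvature, globalization through the projective/Bergman--Bochner embedding via Calabi rigidity, then identification of the complement as pluripolar) matches the paper's strategy, but the proposal has genuine gaps at exactly the points where the real work happens. First, the claim in your local step that the curvature is forced to be $-2/(n+1)$ is unjustified there: a priori the local model is $(\BB^n,\lambda\o_{\BB^n})$ for an unknown $\lambda>0$ determined by the curvature constant, and the paper only pins down $\lambda=1$ at the very end, after the complement has been shown to be a closed pluripolar (hence $A^2$-removable) set. Second, and more seriously, your pluripolarity step invokes ``a Wiegerinck-type sharp $L^2$-removability theorem identifying closed $A^2$-removable subsets of $\BB^n$ with closed pluripolar subsets.'' No such clean converse removability theorem is available, and moreover at that stage you do not yet know that restriction $A^2(\BB^n)\to A^2(V)$ is an isometric isomorphism --- you only know the metric identity $\o_V=\lambda\,\o_{\BB^n}|_V$, and upgrading that to kernel-level information is precisely the content of the paper's computation. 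The paper instead derives from the metric identity the kernel factorization $K_D(z,w)=c\,K_D(z,0)K_D(0,w)K_{\BB^n}^{\lambda}(z,w)$ via the reproducing property, deduces that $1/K_D(\cdot,0)$ extends holomorphically to $\BB^n$ and that $K_D$ stays bounded at boundary points of $D$ off the zero set of that extension, and then applies the Pflug--Zwonek theorem (which rests on Ohsawa--Takegoshi and requires pseudoconvexity of $D$ --- this is where Steinness actually enters) followed by Josefson's theorem to conclude that $\partial D\cap\BB^n$ is pluripolar. Your ``main obstacle'' paragraph misattributes the role of Steinness to killing monodromy; in the paper, single-valuedness of the developing map is obtained instead from the identity $[L]\circ\F=\mathcal T\circ\wt f$ with $\F$ the globally defined Bergman--Bochner map and $\mathcal T$ injective, together with completeness of the target metric $\lambda\o_{\BB^n}$ to keep the continued map inside the ball.

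A further glossed point: injectivity of $F$ does not follow merely from ``the separating property,'' since the hypothesis only grants base-point freeness and separation of holomorphic directions, not separation of points. The paper must prove that the Bergman--Bochner map separates points (the analogue of its Lemma \ref{one-to-one}), via an analytic-continuation-of-isometries argument along a curve joining two putative points with the same image. So while the skeleton of your proposal is the right one, the two decisive ingredients --- the reproducing-kernel factorization that converts the metric identity into extension and boundedness statements for $K_D$, and the Pflug--Zwonek/Josefson mechanism that converts those into pluripolarity --- are missing, and the curvature normalization and injectivity claims are asserted rather than proved.
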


As an immediate consequence, we have the following:

\begin{corollary}\label{cor5.3}  Suppose $\Omega$ is a bounded pseudoconvex domain in $\CC^n$. Then it  has a negative constant holomorphic sectional curvature  if and only if $\Omega$ is biholomorphic to the unit ball in $\CC^n$ with 
a closed pluripolar set removed.
 \end{corollary}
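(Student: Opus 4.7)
The plan is to derive Corollary \ref{cor5.3} as a direct application of Theorem \ref{thm5.2}, after verifying that any bounded pseudoconvex $\O \subset \CC^n$ automatically satisfies the theorem's hypotheses, and after handling the ``if'' direction via removable singularities for $L^2$ holomorphic forms.

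For the ``only if'' direction I would first recall that a bounded pseudoconvex domain in $\CC^n$ is Stein. The Bergman space $A^2(\O)$, consisting of $L^2$ holomorphic $n$-forms, contains $dz_1 \wedge \cdots \wedge dz_n$ (a nowhere-vanishing form of finite norm since $\O$ has finite Lebesgue volume), so $A^2(\O)$ is base point free. For the separation of directions, at any $p \in \O$ and any $0 \neq v \in T^{1,0}_p \O$, the forms $(z_i - p_i)\,dz_1 \wedge \cdots \wedge dz_n$ for $i = 1, \ldots, n$ all lie in $A^2(\O)$ by boundedness, they vanish at $p$, and their differentials at $p$ span the cotangent space, so at least one pairs nontrivially with $v$. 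Theorem \ref{thm5.2} then produces a biholomorphism from $\O$ onto $\BB^n \setminus E$ for some closed pluripolar $E \subset \BB^n$.

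For the ``if'' direction, assume $\O$ is bounded pseudoconvex and biholomorphic to $\BB^n \setminus E$. Since the Bergman metric is a biholomorphic invariant, it suffices to show that the Bergman metric of $\BB^n \setminus E$ has constant negative holomorphic sectional curvature. Here I would invoke the standard removable singularity fact that a closed pluripolar subset of $\BB^n$ is removable for $L^2$ holomorphic $n$-forms, giving an isometric identification $A^2(\BB^n \setminus E) \cong A^2(\BB^n)$ via restriction. Consequently the Bergman kernel, and hence the Bergman metric, of $\BB^n \setminus E$ coincides with that of the ball, which is the classical Bergman metric of constant negative curvature.

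The main conceptual input is Theorem \ref{thm5.2} itself; what remains is the routine hypothesis check above plus the $L^2$ removable singularity statement across closed pluripolar sets, both of which are standard. The only mildly subtle point is that pluripolarity is strong enough to force extendability in the $L^2$ sense (rather than merely in $L^\infty_{\mathrm{loc}}$); I would cite this from the pluripotential theory literature rather than reprove it here.
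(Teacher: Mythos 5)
Your proposal is correct and matches the paper's intent: the authors state Corollary \ref{cor5.3} as an immediate consequence of Theorem \ref{thm5.2}, relying on exactly the same routine verification that a bounded pseudoconvex domain is Stein with a base-point-free, direction-separating Bergman space, and on the same removability of closed pluripolar sets for $L^2$ holomorphic functions (cited from Irgens) for the ``if'' direction, which the paper spells out at the start of Section 4. No gaps.
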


Corollary  \ref{cor5.3}  confirms  a  folklore
conjecture that a bounded pseudoconvex domain has a negative holomorphic
sectional curvature for its Bergman metric if and only if it is
biholomorphic to  the unit ball of the same dimension with possibly
a pluripolar subset removed. In two recent papers by X. Dong and B.
Wong \cite{DW1}  \cite{ DW2}, the authors made an earlier important
progress on this problem for a bounded domain $D\subset\mathbb{C}^n$. They answered this problem under the
assumption that  the Calabi diastasis function $\frac{K(z, z)}{|K(z,
p)|^{2}}$ blows up on the boundary of the domain $D$ as a function of $z$ for a fixed $p\in D$, where $K(z,w)$ is
the Bergman kernel function  of the domain $D$ with $z,w\in D$.
With more natural assumptions, they also proved a multi-dimensional
Carath\'eodory theorem for such a biholomorphic equivalence map.
For precise statements of the results of Dong-Wong, the reader is
referred to their recent papers \cite{DW1} \cite{DW2}. Our proof of
this folklore conjecture in its full generality is quite different.
Among the main  ingredients in our proof  are the rigidity property of holomorphic mappings by  Calabi \cite{Ca},
delicate use of the
reproducing property of the Bergman kernel functions and  deep
results from pluripotential theory.
Here pseudoconvexity plays an important role as we need a result of
Pflug and Zwonek \cite{PZ} which is based on the $L^2$-extension
theorem of Ohsawa-Takegoshi on pseudoconvex domains.

In addition to above,  we also study the  zero curvature case.   A result  in the last section is stated as follows:

\begin{theorem}\label{thm7.1}
Let $\Omega$ be a  complex manifold of complex dimension $n$.
Suppose that its Bergman space $A^2(\O)$ is base point free and separates holomorphic directions.  Suppose that
the Bergman metric of $\O$
 has zero constant  holomorphic sectional curvature. Then $\O$ is biholomorphic to a domain $D$ in $\CC^n$   whose Bergman space has an orthonormal basis of the form $\{\frac{z^\a}{\sqrt{\a!}} \phi\}_{\a\in ({\NN_0})^n}$, 
 where $\NN_0$ is the collection of non-negative integers,  $\phi\in A^2(D)$
 with $\phi\not =0$ and $\|\phi\|=1$.
\end{theorem}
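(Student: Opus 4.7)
The plan is to exploit the rigid form that the Bergman kernel must take under the flatness hypothesis, and then promote this local structure to a global biholomorphism.

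\emph{Step 1: Flatness of the Bergman metric.} For any K\"ahler metric, constant holomorphic sectional curvature $\kappa$ forces the full curvature tensor to have the form $R_{i\bar j k\bar l}=\frac{\kappa}{2}(g_{i\bar j}g_{k\bar l}+g_{i\bar l}g_{k\bar j})$, so $\kappa=0$ makes the Bergman metric flat.

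\emph{Step 2: Local formula for $K$.} Because the metric is flat K\"ahler, around each $p_0\in\Omega$ one can pick holomorphic coordinates $z=(z_1,\dots,z_n)$ centered at $p_0$ in which $\partial_i\partial_{\bar j}\log K(z,z)=\delta_{ij}$. Then $\log K(z,z)-|z|^2$ is pluriharmonic and equals $2\operatorname{Re}h(z)+c$ for some holomorphic $h$ and constant $c$. Setting $\phi(z):=e^{h(z)+c/2}$ gives $K(z,z)=|\phi(z)|^2 e^{|z|^2}$, and polarizing (using the uniqueness of the sesquiholomorphic extension of a real-analytic diagonal) yields
\[
K(z,w)=\phi(z)\overline{\phi(w)}\,e^{z\cdot\bar w}=\sum_{\alpha\in\NN_0^n}\frac{z^\alpha\phi(z)}{\sqrt{\alpha!}}\cdot\overline{\frac{w^\alpha\phi(w)}{\sqrt{\alpha!}}}
\]
in this chart, already exhibiting the desired orthonormal system locally.

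\emph{Step 3: Global biholomorphism $F:\Omega\to D\subset\CC^n$.} The core task is to assemble these local flat coordinates into one global chart. Use the base-point-free and direction-separating hypotheses to choose $\phi_0\in A^2(\Omega)$ with $\phi_0(p_0)\neq 0$ and to construct, via the reproducing property of $K$ and the first-order jet at $p_0$, $n$ further elements of $A^2(\Omega)$ whose ratios with $\phi_0$ furnish the flat coordinates of Step 2 near $p_0$. Extend them along $\Omega$ by analytic continuation, and verify single-valuedness from the globally defined K\"ahler potential $\log K(z,z)$; this produces a holomorphic local biholomorphism $F:\Omega\to\CC^n$. Injectivity is then forced by Step 2: if $F(q_1)=F(q_2)$ then the kernel formula and analytic continuation imply that $K(\cdot,q_1)$ and $K(\cdot,q_2)$ are proportional in $A^2(\Omega)$, i.e.\ the Calabi diastasis $\log[K(q_1,q_1)K(q_2,q_2)/|K(q_1,q_2)|^2]$ vanishes, and combined with the separation hypotheses this gives $q_1=q_2$. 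Transferring the kernel formula to $D:=F(\Omega)$ gives exactly the ONB $\{z^\alpha\phi/\sqrt{\alpha!}\}_{\alpha\in\NN_0^n}$.

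\emph{Main obstacle.} The hard part is the globalization in Step 3: the purely local existence of flat coordinates does not rule out nontrivial monodromy or noninjectivity along loops in $\Omega$. One must use decisively that the whole construction is anchored to the reproducing kernel $K$, which is globally defined on $\Omega\times\Omega$, together with the separation hypotheses on $A^2(\Omega)$---an argument analogous in spirit to, though technically simpler than, the corresponding globalization in the negative-curvature theorem of \S 4.
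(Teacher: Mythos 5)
Your Steps 1--2 and the globalization strategy in Step 3 follow the same route as the paper: locally flatten the metric, write $K(z,z)=|\phi(z)|^2e^{|z|^2}$, polarize to get $K(z,w)=\phi(z)\overline{\phi(w)}e^{\langle z,w\rangle}$, and extend the flat chart to a global biholomorphism onto $D\subset\mathbb{C}^n$ by analytic continuation anchored to the globally defined kernel (the paper does this via the explicit isometric embedding of $(\mathbb{C}^n,\omega_{eucl})$ into $\mathbb{P}^\infty$ furnished by $e^{|z|^2}=\sum_\alpha|z^\alpha|^2/\alpha!$, Calabi's extension and rigidity theorems, and the Bergman--Bochner map; your appeal to ``the globally defined K\"ahler potential $\log K$'' is not quite right on a manifold, where only $i\partial\bar\partial\log K$ is intrinsic, but you correctly identify the needed mechanism).

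The genuine gap is at the end: you assert that the factorization $K_D(z,w)=\sum_\alpha\bigl(z^\alpha\phi(z)/\sqrt{\alpha!}\bigr)\overline{\bigl(w^\alpha\phi(w)/\sqrt{\alpha!}\bigr)}$ ``already exhibits the desired orthonormal system,'' but a pointwise decomposition of a reproducing kernel into a sum of rank-one products only produces a Parseval frame, never automatically an orthonormal basis (e.g.\ in a one-dimensional space, $K=\frac{1}{2}e\bar e+\frac{1}{2}e\bar e$ is such a decomposition by two non-unit, non-orthogonal vectors). Two things must actually be proved, and they occupy the second half of the paper's argument: first, that $z^\alpha\phi\in A^2(D)$ at all --- $D$ is unbounded here, and the paper establishes integrability via the elementary but essential inequality $|z_j|^{2m}\le C_m\sum_{k=0}^{5}e^{r\,\mathrm{Re}(e^{ik\pi/3}z_j)}$ combined with $\int_D|e^{\langle z,w\rangle}\phi(w)|^2\,dv(w)=e^{|z|^2}$; second, the orthonormality relations $\int_D w^\alpha\overline{w}^\beta|\phi(w)|^2\,dv(w)=\alpha!\,\delta_{\alpha\beta}$, which the paper extracts by applying the reproducing property to $w^\alpha\phi$ to get $z^\alpha=\int_D e^{\langle z,w\rangle}w^\alpha|\phi(w)|^2dv(w)$ and then differentiating under the integral sign at $z=0$ (justified by an exhaustion and dominated convergence). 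Only after orthonormality is known does the kernel identity give completeness. Your proposal skips all of this, so as written it does not establish the conclusion of the theorem.
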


In the Appendix, John Treuer  provides a proof of the following:

 \begin{theorem}\label{thm7.2}
There does not exist a domain $D\subset \CC^n$ such that its Bergman space $A^2(D)$ 
has an orthonormal basis of the form $\{\frac{z^\a}{\sqrt{\a!}}\phi\}_{\a\in ({\NN_0})^n}$, where $\phi\in A^2(D)$
 with $\phi\not =0$ and $\|\phi\| =1$.
\end{theorem}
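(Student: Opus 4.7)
\bigskip\noindent \textbf{Proof proposal for Theorem \ref{thm7.2}.} The plan is a proof by contradiction. If such a basis exists, the Bergman kernel of $D$ expands as
$$
K(z,w) \;=\; \sum_{\a\in(\NN_0)^n}\frac{z^\a\,\overline{w^\a}}{\a!}\,\phi(z)\,\overline{\phi(w)} \;=\; e^{z\cdot\bar w}\,\phi(z)\,\overline{\phi(w)},
$$
and orthonormality of $\{z^\a\phi/\sqrt{\a!}\}$ is equivalent to the moment identities
$$
\int_D z^\a\,\bar z^\b\,|\phi(z)|^2\, dV(z) \;=\; \a!\,\d_{\a\b}, \qquad \a,\b\in(\NN_0)^n.
$$
Thus the finite positive measure $\mu:=|\phi|^2\, dV$ on $D$, extended by zero to $\CC^n$, shares all complex-mixed moments with the standard Fock measure $\mu_F:=\pi^{-n}e^{-|z|^2}\, dV$ on $\CC^n$. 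The strategy is to invoke a moment-uniqueness theorem to identify $\mu=\mu_F$ and then derive a pluriharmonicity contradiction on $D$.

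The central step is moment determinacy on $\RR^{2n}\cong\CC^n$. Specializing the moment identity to $\a=\b=k e_j$ yields $\int |z_j|^{2k}\, d\mu=k!$, hence $\int x_j^{2k}\, d\mu\le k!$ and likewise $\int y_j^{2k}\, d\mu\le k!$. By Stirling, $(k!)^{-1/(2k)}\ge c/\sqrt{k}$ for some $c>0$, so each of the $2n$ axial marginals of $\mu$ satisfies the one-dimensional Carleman condition $\sum_k m_{2k}^{-1/(2k)}=\infty$. The multivariate determinacy theorem of Petersen (extending Carleman/Nussbaum) then implies that $\mu$ is the unique positive Borel measure on $\RR^{2n}$ with the prescribed moments. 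Since $\mu_F$ solves the same moment problem, we obtain $\mu=\mu_F$ as measures on $\RR^{2n}$.

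The contradiction is then immediate. Because $\mu_F$ has strictly positive density $\pi^{-n}e^{-|z|^2}$, the complement $\CC^n\sm D$ must be Lebesgue-null and $|\phi(z)|^2=\pi^{-n}e^{-|z|^2}$ almost everywhere on $D$, hence everywhere by continuity. In particular, $\phi$ vanishes nowhere on $D$, so $\log|\phi|^2$ is pluriharmonic there; but the identity forces $\log|\phi(z)|^2=-|z|^2-n\log\pi$ on $D$, whose $\p\dbar$ equals $-\sum_j dz_j\wedge d\bar z_j$ (up to the usual sign convention), a nonzero $(1,1)$-form. This contradicts pluriharmonicity and finishes the proof.

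The main obstacle is the multivariate moment-determinacy step: the moment problem on $\RR^{2n}$ can be indeterminate in general, so the growth condition must be checked carefully and Carleman/Nussbaum/Petersen invoked in the right form. The remaining steps---expanding the Bergman kernel, reading off the moments, and applying $\p\dbar$---are routine once determinacy is in place.
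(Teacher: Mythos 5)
Your proposal is correct and follows essentially the same route as the paper's Appendix: identify the measure $|\phi|^2\chi_D\,dv$ with the Gaussian $\pi^{-n}e^{-|z|^2}dv$ via uniqueness for the (complex) moment problem, then derive the contradiction $0=-4n$ by applying the Laplacian to $\log|\phi|^2$. The only difference is that you explicitly verify determinacy through the Carleman condition on the marginals (via Stirling and Petersen's theorem), whereas the paper simply cites the corresponding uniqueness theorem from Schm\"udgen's book.
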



Combining the above results, one immediately has the following:

\begin{corollary}\label{thm7.3}
Let $\Omega$ be a  complex manifold of complex dimension $n$.
Suppose that its Bergman space $A^2(\O)$ is base point free and separates holomorphic directions.  Then its Bergman metric cannot have zero constant holomorphic sectional curvature.
\end{corollary}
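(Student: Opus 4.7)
The plan is to derive Corollary \ref{thm7.3} by a short contradiction argument that combines Theorem \ref{thm7.1} with Theorem \ref{thm7.2} from the Appendix. Since both of these are established earlier in the paper, there is essentially no new analytic work to carry out; the corollary is a formal consequence once these two statements are in hand.

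More specifically, I would suppose, for the sake of contradiction, that there exists a complex manifold $\Omega$ of dimension $n$ whose Bergman space $A^2(\Omega)$ is base point free and separates holomorphic directions, and whose Bergman metric has zero constant holomorphic sectional curvature. Then all the hypotheses of Theorem \ref{thm7.1} are satisfied, so one obtains a biholomorphism from $\Omega$ onto a domain $D\subset\CC^n$ whose Bergman space admits an orthonormal basis of the special form $\{\frac{z^\a}{\sqrt{\a!}}\phi\}_{\a\in(\NN_0)^n}$ for some $\phi\in A^2(D)$ with $\phi\not=0$ and $\|\phi\|=1$. Since biholomorphism transfers the full Bergman-space structure, the existence of such a $D$ is equivalent to the existence of such an $\Omega$.

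At this stage I would invoke Theorem \ref{thm7.2}, which is precisely the statement that no domain $D\subset\CC^n$ can carry an orthonormal Bergman basis of that form. This immediately contradicts the output of Theorem \ref{thm7.1}, so the standing assumption that the Bergman metric of $\Omega$ has zero constant holomorphic sectional curvature must be false. This completes the argument.

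The only conceptual subtlety to flag is that Theorem \ref{thm7.1} is phrased up to biholomorphism, while Theorem \ref{thm7.2} concerns honest domains in $\CC^n$. One should verify that this shift is harmless, i.e., that the property ``$A^2$ has an orthonormal basis of the form $\{\tfrac{z^\a}{\sqrt{\a!}}\phi\}$'' is invariant under the biholomorphism produced by Theorem \ref{thm7.1}, which it is, because Theorem \ref{thm7.1} directly asserts that $\Omega$ is biholomorphic to such a $D$. No further estimates, kernel computations, or curvature calculations are required; all of the real work has already been absorbed into Theorems \ref{thm7.1} and \ref{thm7.2}.
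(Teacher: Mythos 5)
Your argument is exactly the paper's: Corollary \ref{thm7.3} is stated there as an immediate consequence of combining Theorem \ref{thm7.1} with Theorem \ref{thm7.2}, and your contradiction argument (flat Bergman metric $\Rightarrow$ biholomorphic to a domain $D\subset\CC^n$ with the special orthonormal basis, which Theorem \ref{thm7.2} rules out) is precisely that combination. The remark about transferring the basis structure under the biholomorphism is a reasonable point to flag, and it is handled correctly.
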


Contrary to Corollary \ref{thm7.3}, we will, however,  construct a pseudoconvex domain in
${\mathbb C}^{n+1}$ with $n\ge 1$  which contains a complex totally geodesic
embedding of the complex
Euclidean space  ${\mathbb C}^n$ equipped with its standard flat
Euclidean metric. Hence the Bergman metric is flat along such a
complex submanifold. 

Since we are  concerned with Bergman metrics locally isometric to  a complex space form, 
rigidity and global extension of  holomorphic mappings   naturally appeared in our  work.   Rigidity and extension of a local isometric holomorphic map or more generally a holomorphic embedding  with its  image being contained in a proper real analytic subvariety  were first studied  in the celebrated paper by E. Calabi \cite{Ca}.   Holomorphic  mappings from a CR submanifold into a proper real analytic subvariety  have been extensively studied in the past several decades and have found important  applications  in   other fields of mathematics.  Along these lines, we  mention   the work done by Mok in \cite{Mo1} \cite{Mo2},  Lamel-Mir \cite{LN}, Mir-Zaitsev \cite{NZ},  Kossovskiy-Lamel-Stolovitch \cite{KLS}, Lamel-Mir-Rond \cite{LNR}, as well as many references therein.
For other work     on Bergman
kernels, Bergman metrics and related topics,   we  refer the  reader to Folland-Kohn
 \cite{FK},  Diederich-Fornaess \cite{DF},  Ohsawa \cite{Oh}, Ebenfelt-Xiao-Xu \cite{EXX}, Li \cite{Li},
Gaussier-Zimmer  \cite{GZ}, just to name a few.

The paper is organized as follows. In Section 2, we introduce some
basic notations and definitions.  In Section 3, we first provide a Reinhardt domain in $\CC^2$ whose Bergman metric has a constant
holomorphic sectional curvature $2$.
 We then prove  Theorem \ref{thm6.1}.  In Section 4, we prove Theorem \ref{thm5.2}.  In Section 5,  we present a proof of Theorem \ref{thm7.1}. In Section 6, we formulate a conjecture related the work here. In the Appendix, Treuer will provide a proof
 of Theorem \ref{thm7.2}.

\medskip

{\bf Acknowledgement}: The authors would like  to thank D. Zaitsev for  his many suggestions  to this paper. They also would like to thank N. Mok and M. Xiao for their helpful conversations during the preparation of this work.


\section{Notations and definitions}

Let
$$\ell^2=\{a=(a_1,\cdots,a_m,\cdots):\ \|a\|^2:=
\sum_{j=1}^{\infty}|a_j|^2<\infty,\ a_j\in \CC\}
$$
 be the Hilbert
space equipped with the standard $\ell^2$-norm  $\|\cdot\|$ defined above.

Let $M$ be a complex manifold of dimension $n$. (In this paper, all
complex manifolds are assumed to be connected).  A map
$F=(f_1,\cdots,f_n,\cdots): M\to \ell^2$ is called a holomorphic map
if each $f_j$ ($j\in \NN$) is a holomorphic function in $M$ and if
$\sum_{j=1}^\infty |f_j(z)|^2$ converges uniformly on any compact
subset of $M$. We denote by $X_F$ the closure of the linear span of
$F(M)$ in $\ell^2$. Apparently, $X_F$ is uniquely determined by the
germ of $F$ at any point $p\in M$.

 Define the infinite dimensional projective space $\PP^\infty:=\ell^2\sm \{0\}/\sim.$ Here, for any given  $x,y\in \ell^2\sm \{0\}$,
$x\sim y $ if and only if $x=ky$ for a
certain $k\in \CC\sm\{0\}.$ For $x=(x_1,\cdots,x_n,\cdots)\in
\ell^2\sm \{0\}$, we write $[x_1,\cdots,x_n,\cdots]$ for its
equivalence class in $\PP^\infty$, called the homogeneous coordinate
of the equivalence class of $x$. $\PP^m$ is naturally identified as
a closed subspace of $\PP^\infty$ by adding zeros to homogeneous
coordinates of  $\PP^m$.

A map ${\cal F}: M\ra \PP^\infty$  is called a holomorphic map from
$M$ into $\PP^\infty$ if there is a holomorphic representation $[F]$
of ${\cal F}$  near each $p\in M$. Namely, for each $p\in M$, there
is a small neighborhood $U_p$ of $p$ in $M$ such that ${\cal F}=[F]$
with $F=(f_1,\cdots, f_m, \cdots)$, where each $f_j$ with $j\in \NN$
is a holomorphic function in $U_p$ and for each $q\in U_p$, there is
a $k$, which may depend on $q$, such that $f_{k}(q)\not = 0$.
Moreover, $\sum_{j=1}^\infty |f_j(z)|^2$ is required  to  converge
uniformly on compact subsets of $U_p$. We write ${\mathcal X}_{\F}=[X_F]\subset
\PP^n$, the projectivization of the closed linear subspace
$X_F\subset\ell^2$, for any local holomorphic representation $F$.
 ${\mathcal X}_{\F}$ is independent of the choice of local holomorphic
representations.

The formal Fubini-Study metric, denoted by $\o_{st}$, of
$\PP^\infty$ with homogeneous coordinates $[z_1,\cdots,z_n,\cdots]$
is formally defined by
\begin{equation}\label{(1.1)}
\omega_{st}=i\p \dbar \log  \Big(\sum_{j=1}^\infty |z_j|^2\Big).
\end{equation}

Let \begin{equation}\label{(1.2)} \F: \ (M, \omega) \to (
\PP^\infty, \omega_{st})
\end{equation}
be a holomorphic map, where $\o$ is a  K\"ahler metric over $M$. If
for any  local holomorphic representation
$\F=[f_1,\cdots,f_n,\cdots]$ over $U\subset M$, it holds that
\begin{equation}\label{(1.3)}
\o=\F^*(\omega_{st}):=i\p \dbar \log \Big(\sum_{j=1}^\infty
|f_j(z)|^2\Big)\quad  \h{over } U,
\end{equation}
we call  $\F$  a local holomorphic isometric embedding from $M$ into
$\PP^\infty$.

\bigskip
Let $\Omega$ be a complex manifold of complex dimension $n\ge 1$.
Write $\Lambda^n(\Omega)$  for the space of  holomorphic $(n,
0)$-forms on $\Omega$ and define the Bergman space of $\Omega$ to be
\begin{equation}
A^2(\Omega):=\Big\{f\in
\Lambda^n(\Omega):(-1)^{\frac{n^2}{2}}\int_{\Omega} f\wedge
\overline f<\infty \Big\}.
\end{equation}
  Then $A^2(\Omega)$ is a Hilbert space with
 an inner product defined below:
\begin{equation}
(f, g)=(-1)^{\frac{n^2}{2}}\int_{\Omega}f\wedge\overline g,\quad
 ~\text{for all}~f, g\in A^2(\Omega).
 \end{equation}
  Assume that $A^2(\O)\not = 0$. Let $\{f_j\}_1^N$ be an orthonormal
 basis of $A^2(\Omega)$  with $N\le \infty$ and define the Bergman kernel $(n,n)$-form \cite{Ko} to
 be $K_\Omega=\sum_{j=1}^N f_j\wedge\overline f_j$. In a local holomorphic coordinate
  chart $(U, z)$ on $\Omega$, we have
\begin{equation}
K_\Omega=K_\Omega(z, \overline z) dz_1\wedge\cdots\wedge dz_n\wedge
d\overline z_1\wedge\cdots\wedge d\overline z_n~\text{ in }~ U.
\end{equation}

In this paper, we will assume that $A^2(\O)$ is {\it base-point
free} in the sense that $K_\Omega(z)=K_\Omega(z,z)>0$  on $\Omega$.
Equivalently, we assume for any $p\in M$, there is a
$L^2$-integrable holomorphic $n$-form $\phi$ such that $\phi(p)\not
=0$. We define a Hermitian $(1, 1)$-form on $\Omega$ by
$\omega_\Omega=\sqrt{-1}\partial\overline\partial \log K_\Omega(z,
\overline z).$ We call $\omega_\Omega$ the Bergman metric  on
$\Omega$ if it induces a positive definite tensor on $\Omega$.

When $\O\subset \CC^n$ is a domain, we identify a holomorphic
$n$-form $\o=f dz_1\wedge\cdots\wedge dz_n$ with the holomorphic function $f\in
L^2(\Omega)$. Thus $A^2(\O)$ reduces to the Hilbert space of
$L^2$-integrable holomorphic functions, which is the classically
defined Bergman space. Then the Bergman kernel function  is now a
real analytic function over $\O$.


We say that $A^2(\O)$ {\it separates holomorphic
directions} if for any $p\in M$ and a non-zero $X_p\in T_p^{(1,0)}$,
there is a $\phi\in A^2(\O)$ such that $\phi(p)=0$ and $X_p(\phi)(p)\not =0$ in a
local holomorphic chart near $p$ where $\phi$ is identified with a
holomorphic function near $p$ as described above.

Assume that the Bergman space $A^2(\O)$
 is base point free. For an
orthonormal base $\{\phi_j\}_{j=1}^{N}$, we then have a well-defined
holomorphic map $\B:=[\phi_1,\cdots,\phi_N,0,\cdots]$ for $N<\infty$
or $\B:=[\phi_1,\cdots,\phi_m,\cdots]$ for $N=\infty$ from $\O$ into
$\PP^\infty$. $\B$ is called a Bergman-Bochner map.
 We say  $A^2(\O)$ separates points  if a Bergman-Bochner
 map  ${\cal B}: \O \to \PP^\infty$ is one-to-one. Later  we will see that in the consideration interested here, the property that separating points will be a consequence of separating holomorphic directions and base point free.

 Bounded domains in $\CC^n$ are  typical examples of complex manifolds whose Bergman spaces satisfy all
 of the above three
 conditions: base-point free, separates points and separates
 holomorphic  directions.

Kobayashi \cite{Ko} showed that for a complex manifold,  the assumption that its
Bergman space  $A^2(\O)$ is base-point free and separates
holomorphic directions is the  necessary and sufficient condition that  the Bergman $(1,1)$-form of $\O$ gives  a
biholomorphcially invariant K\"ahler metric, called the Bergman
metric of $\O$.

\section{ Bergman metrics with positive constant   holomorphic sectional curvature}

In this section, we study the problem if there is a complex manifold
whose Bergman metric has   a constant holomorphic
sectional curvature (HSC).

 We start with the following  surprising  example
  which demonstrates  that the Bergman metric of
a domain may indeed have a positive constant holomorphic sectional
curvature $2$, a maximally allowable one.

 Let $\alpha>2$ and  define a complete Reinhardt domain $D(\a)$ as follows:
\begin{equation}
D(\alpha)=\Big\{(z,w)\in \CC^2: \Big| |w|^2-|z|^2\Big|<
r(z,w)^{-\alpha} \Big\},\quad r(z,w)=1+|z|^2+|w|^2.
\end{equation}
\begin{theorem} \label{example-6}For  $2<\alpha<3$, the following
statements hold:

\noindent (i) The Bergman space
\begin{equation}
A^2(D(\alpha))=\hbox{Span}\{1, z, w\}.
\end{equation}

\noindent (ii) The Bergman metric of $D(\a)$ has  constant
holomorphic sectional curvature $2$.

\end{theorem}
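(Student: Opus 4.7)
The plan is to handle (i) and (ii) separately, using the full Reinhardt symmetry of $D(\alpha)$ throughout. For (i), since $D(\alpha)$ is a complete Reinhardt domain, the monomials $\{z^j w^k\}_{j,k\ge 0}$ form a complete orthogonal system in $A^2(D(\alpha))$, so the task reduces to deciding which $(j,k)$ give $\|z^j w^k\|^2<\infty$. Passing to polar coordinates, then using the substitutions $u=|z|^2$, $v=|w|^2$ followed by $s=u+v$, $t=v-u$, the squared norm equals, up to a positive constant,
\begin{equation*}
\int_0^\infty \int_{|t|<T(s)} (s-t)^j (s+t)^k \, dt\,ds, \qquad T(s) = \min\bigl(s,(1+s)^{-\alpha}\bigr).
\end{equation*}
Near $s=0$ the inner integrand is dominated by a polynomial in $s$ and integrates. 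For large $s$ we have $T(s)\sim s^{-\alpha}$ while the integrand is $\sim s^{j+k}$, so convergence at infinity holds iff $j+k<\alpha-1$. Since $2<\alpha<3$, the only admissible pairs are $(0,0)$, $(1,0)$, $(0,1)$, giving $A^2(D(\alpha))=\text{Span}\{1,z,w\}$.

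For (ii), the $(z,w)\leftrightarrow(w,z)$ symmetry of $D(\alpha)$ forces $\|z\|=\|w\|$, so after orthonormalizing the basis of (i) the Bergman kernel takes the form
\begin{equation*}
K_{D(\alpha)}(z,w) = a + b\bigl(|z|^2+|w|^2\bigr), \qquad a=\|1\|^{-2},\ b=\|z\|^{-2}.
\end{equation*}
Introducing rescaled coordinates $\zeta=\sqrt{b/a}\,(z,w)$ gives $\log K_{D(\alpha)} = \log a + \log(1+\|\zeta\|^2)$, so
\begin{equation*}
\omega_{D(\alpha)} = i\partial\bar\partial \log K_{D(\alpha)} = i\partial\bar\partial \log\bigl(1+\|\zeta\|^2\bigr)
\end{equation*}
is the pullback of the Fubini--Study metric on $\PP^2$ under the holomorphic open embedding $z\mapsto\zeta$. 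The Fubini--Study metric has constant holomorphic sectional curvature $2$, and this is preserved under holomorphic isometric pullback, so $\omega_{D(\alpha)}$ has constant HSC $=2$.

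The only real obstacle is the integrability analysis in (i): one has to match the decay of the Reinhardt slab width $T(s)$ at infinity against the polynomial growth $s^{j+k}$, and use the precise range $2<\alpha<3$ to pin down the cutoff $j+k\le 1$. Once the Bergman space is identified, part (ii) is essentially pattern recognition---the very particular form $a+b\|z\|^2$ is, after an affine change of variables, the Fubini--Study potential on $\CC^2\subset\PP^2$, so constant HSC $=2$ follows without any further curvature computation. Positivity of $K_{D(\alpha)}$ on $D(\alpha)$ (hence well-definedness of the Bergman metric) is automatic from $a,b>0$.
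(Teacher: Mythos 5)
Your proof is correct and follows essentially the same route as the paper: reduce (i) to deciding which monomials are square-integrable using the Reinhardt structure, match the $O(r^{-\alpha})$ width of the defining slab against the polynomial growth of $|z^jw^k|^2$ to obtain the cutoff $j+k\le 1$ for $2<\alpha<3$, and then recognize the kernel $a+b\bigl(|z|^2+|w|^2\bigr)$ as a rescaled Fubini--Study potential for (ii). The only cosmetic difference is that you package the integrability estimate as a single two-sided asymptotic in the variables $s=|z|^2+|w|^2$, $t=|w|^2-|z|^2$, whereas the paper runs separate upper and lower bounds for the convergent ($j+k\le 1$) and divergent ($j+k\ge 2$) cases.
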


\begin{proof} By the definition of $D=D(\alpha)$, one can easily see that
$$
\int_D dv(z,w)\lesssim \int_D |z|^2 dv(z,w).
$$
Here we write $dv$ for the related Lebesgue measure.

To prove that $1, z\in A^2(D(\alpha))$, it suffices to compute that
\begin{eqnarray*}
\int_D |z|^2 dv(z,w)
&=&\int_{\CC}
dv(z)\int_{|z|^2-r(z, w)^{-\alpha} <|w|^2<|z|^2 +r(z,w)^{-\alpha} }
 |z|^2 dv(w)\\
&\lesssim&\int_{\CC}  |z|^2 dv(z) \int_{|z|^2-r(z, 0)^{-\alpha} <|w|^2<|z|^2 +r(z,0)^{-\alpha} }dv(w)\\
&=&2\pi^2 \int_0^\infty |z|^2 \Big(|z|^2+r(z,0)^{-\alpha}-(|z|^2-r(z,0)^{-\alpha})\Big) |z| d|z|\\
&=&4\pi^2 \int_0^\infty |z|^3  r(z,0)^{-\alpha} d|z|\\
&=&2\pi^2 \int_0^\infty r   (1+r)^{-\alpha} d r\\
&<&2\pi^2 \int_0^\infty   (1+r)^{-\alpha+1} d r\\
&=&{2 \pi^2 \over \alpha-2}.
\end{eqnarray*}
 Therefore, if
$\alpha\in (2,3)$, by symmetry in $z$ and $w$ variables, we have
$\{1, z, w\}\in A^2(D)$.
Next we claim that $z^p w^q\not\in A^2(D)$ if $p+q\ge 2$ for
$2<\a<3$. In fact,
$$
\int_{D(\alpha)} |z^p w^q|^2 dv(z,w) \gtrsim \int_{D(\alpha)}
|z^{2(p+q)}| dv(z,w) \gtrsim  \int_0^\infty  r^{p+q} (1+r)^{-\alpha}
d r=+\infty.
$$ Since $D(\a)$ is a complete Reinhardt domain,
$L^2$-integrable monomials form an orthogonal basis of $A^2(D(\a))$.
Hence,
$$
A^2(D)=\hbox{Span}\{1, z, w\}.
$$
Moreover, by symmetry,  $\|w\|_{A^2(D(\a))}=\|z\|_{A^2(D(\a))}$.

The Bergman kernel function of $D(\a)$ can thus be written as
$$
K(z,w)=c_0(1+c_1|z|^2+c_1|w|^2)\quad \hbox{for } (z,w)\in D(\alpha),
$$
where $c_0=1/v(D(\alpha))$ and $c_1=(c_0\int_{D(\alpha)}|z|^2 dv)^{-1}$ are  positive constants depending only on
$\a$.  Since it is isometric to the Fubini-Study metric of $\PP^2$
restricted to a domain in the cell $\CC^2$ with
$$
\omega_{st}= \p\dbar \log (1+|z'|^2+|w'|^2), \ \hbox{through the
scaling } (z',w')=(\sqrt{c_1}z, \sqrt{c_1}w)
$$
the Bergman metric of $D(\a)$  has a constant holomorphic sectional
curvature equal to $2$.

\end{proof}

Notice that $D(\a)$ is unbounded  and $A^2(D(\a))$ has
dimension three. On the other hand, we have Theorem \ref{thm6.1} which we restate as follows for the  reader's convenience:

\begin{theorem}\label{thm6.11}
Let $\Omega$ be a  complex manifold of dimension $n$. Suppose its
Bergman space $A^2(\O)$ is base point free and
separates holomorphic directions. Suppose that the Bergman metric of
$\O$ has positive constant holomorphic sectional curvature.  Then
$A^2(\O)$ is of finite dimension. Moreover, $\O$ is biholomorphic to
a  domain in ${\mathbb P}^n$.
\end{theorem}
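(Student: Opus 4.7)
The plan is to combine two ingredients: the Bergman--Bochner map realizes the Bergman metric $\omega_\Omega$ as the pullback of the Fubini--Study form of $\PP^\infty$, and Calabi's classical classification of K\"ahler immersions from constant holomorphic sectional curvature manifolds into complex projective space forces $\B(\Omega)$ to lie inside a Veronese-embedded copy of $\PP^n$. Both conclusions of the theorem will fall out of this factorization.

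Setup: fix an orthonormal basis $\{\phi_j\}_{j=1}^{N}$ of $A^2(\Omega)$ with $N \le \infty$. The base-point-free hypothesis makes the Bergman--Bochner map $\B := [\phi_1,\phi_2,\ldots] : \Omega \to \PP^{N-1}\subset \PP^\infty$ a well-defined holomorphic map, while the direction-separating hypothesis promotes it to an immersion. Since $K_\Omega = \sum_j |\phi_j|^2$, one reads off
\[
\omega_\Omega \;=\; i\p\dbar \log K_\Omega \;=\; \B^*\omega_{st},
\]
so $\B$ is a holomorphic K\"ahler immersion from $(\Omega, \omega_\Omega)$ into $(\PP^\infty, \omega_{st})$.

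I would next invoke Calabi's rigidity theorem for K\"ahler immersions into complex projective space. Because $\omega_\Omega$ has constant positive holomorphic sectional curvature, Calabi's classification forces this curvature to equal $2/d$ for some positive integer $d$, and forces the immersion, up to a unitary automorphism of the ambient projective space, to factor as $\B = U\circ v_d \circ F$, where $v_d:\PP^n\to \PP^{N_d-1}$ is the $d$-th Veronese embedding, $N_d = \binom{n+d}{d}$, and $F:\Omega \to \PP^n$ is a holomorphic local biholomorphism. Connectedness of $\Omega$ together with the uniqueness-up-to-isometry clause in Calabi's rigidity glues the local versions of $F$ into a single globally defined holomorphic map and fixes a single ambient inclusion $\PP^{N_d-1}\hookrightarrow\PP^\infty$.

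Two conclusions then follow. First, $\B(\Omega)$ lies in the fixed finite-dimensional subspace $\PP^{N_d-1}$; but linear independence of the orthonormal basis $\{\phi_j\}$ forces the closed linear span of the vectors $(\phi_j(p))_j$, $p\in\Omega$, to equal all of $\ell^2$, and combined with containment in an $N_d$-dimensional subspace this yields $\dim A^2(\Omega)=N_d<\infty$. Second, $F$ is a local biholomorphism onto an open subset of $\PP^n$; injectivity of $F$ follows once one observes that $\B(p)=\B(q)$ would force proportionality of the reproducing kernel sections $K_\Omega(\cdot,\overline p)$ and $K_\Omega(\cdot,\overline q)$, a coincidence of evaluation functionals that the base-point-free and direction-separating hypotheses rule out once combined with Calabi's local factorization. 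The main obstacle is transporting Calabi's rigidity, classically stated for finite-dimensional ambient projective spaces, to the infinite-dimensional target $\PP^\infty$ considered here; the resolution is that Calabi's argument is local and the Veronese image already collapses the local ambient dimension to the finite number $N_d$, after which connectedness of $\Omega$ and the uniqueness clause globalize the conclusion.
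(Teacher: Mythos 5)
Your overall strategy --- realize $\omega_\Omega$ as $\B^*\omega_{st}$ via the Bergman--Bochner map, invoke Calabi to force the curvature to equal $2/m$ with $m\in\NN$ and to factor $\B$ through the $m$-th Veronese embedding of $\PP^n$, and read off finite dimensionality from the containment of the closed linear span of $\B(\O)$ in a finite-dimensional subspace --- is essentially the route the paper takes. (The paper proves the integrality of $m$ by hand, via positivity of the Taylor coefficients of $(1+|z'|^2)^\lambda$ against the sum-of-squares identity, and deduces finite dimensionality from the D'Angelo lemma rather than from linear nondegeneracy of $\B$; these are minor variations.)

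The genuine gap is your injectivity step. You assert that $\B(p)=\B(q)$ would force proportionality of the kernel sections at $p$ and $q$, and that this is ruled out by base-point-freeness and direction-separation. It is not: those hypotheses only make $\B$ a well-defined immersion and say nothing about point separation; proportional evaluation functionals at two distinct points are entirely consistent with a base-point-free, direction-separating Bergman space. Indeed the paper explicitly remarks in Section 2 that separation of points must be \emph{derived} in this situation, and it devotes a dedicated lemma (Lemma~\ref{one-to-one}) to doing so: assuming $\B(p)=\B(p^*)$ with $p\neq p^*$, one continues the germ of a local isometry onto $(\PP^n,m\o_{st})$ along a path from $p$ to $p^*$ through a finite chain of overlapping balls, propagates the identity $\B\circ\Phi^{-1}=[\wt L]\circ\Psi$ across overlaps by uniqueness of local isometries, and contradicts the injectivity of the Veronese map $\Psi$, since the two ends of the continuation send the same target point to $p$ and to $p^*$. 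Your phrase ``once combined with Calabi's local factorization'' gestures at this, but the monodromy argument is precisely the missing content; without it you obtain only a locally biholomorphic, possibly non-injective map onto a domain in $\PP^n$, not the claimed biholomorphism. Relatedly, gluing the local maps $F$ into one global map by ``connectedness plus uniqueness'' does not by itself exclude monodromy; the paper resolves this by writing $\Psi\circ F=[L]\circ\B$ with $\B$ globally defined and $\Psi$ injective, which forces the continuation of $F$ to be single-valued.
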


\begin{proof}[Proof of Theorem \ref{thm6.11}] Let $\Omega$
be a complex manifold with its Bergman metric being well defined and
having a positive constant holomorphic sectional curvature. In a
local holomorphic chart $(U, z)$ with $z(p)=0$ for a certain $p\in
U$. Still write $U$ for $z(U)$ for simplicity of notation. Then by a
result of  Bochner  (see \cite{Bo} or \cite{He}),  there is a small
neighborhood, still denoted by $U$, of $0\in \O$ and a biholomorphic
map $F=[1,F^o]$ from $U$ to a neighborhood $V\subset \PP^n$ of
$[1,0,\cdots,0]$ with $F^o(0)=(z_1,\cdots,z_n)A+o(|z|^2)$, where $A$
is  an $n\times n$ constant matrix with $\h{det}(A)\not =0$,  such
that $ \o_\O=F^*(\ld \o_{st})$ for a certain positive constant
$\ld$. Here we use $[1,z]$ for coordinates of $V$; and  $\o_{st}$ is
the Fubini-Study metric of $\PP^n$. Let $\{\phi_j\}_{j=1}^{N}$ be an
orthonormal basis of $A^2(\O)$  and write $\phi_j=\phi_j^o
dz_1\wedge\cdots dz_n$ in $U$. We assume, without loss of
generality, that $\phi_1^o(0)\not =0$ and $\phi^o_j(0)=0$ for $j>1$.
Then
$$i\p\ov{\p}\log\sum_{j=1}^N \Big|\phi_j^o \Big|^2=i\p\ov{\p}\log(1+|F^o|^2)^\ld.$$
Then applying a standard trick, the above gives the following
identity for $z$ near $ 0$:
$$\sum_{j=2}^N \Big|\frac{\phi_j^o}{\phi_1^o} \Big|^2=(1+|F^o|^2)^\ld.$$
Now we claim that $\ld$ is a positive integer.  Indeed, after a
change of holomorphic coordinates of the form $z'=F^o(z)$ , we have
$(1+|z'|^2)^\ld=\sum_{j=2}^N
\Big|\frac{\phi_j^o}{\phi_1^o}((F^o)^{-1}(z')) \Big|^2.$ If $\ld$ is
not a natural number, then
$$
(1+|z'|^2)^\alpha
=1+\sum_{k=1}^\infty {\lambda\cdots (\lambda-k+1) \over k!} |z'|^{2k}.
$$
 This implies that a certain coefficient in the Taylor expansion
at $0$ of $(1+|z'|^2)^\ld$ is negative, equivalently, there is a
multiple index $\a$ such that
$\p^{\a}\ov{\p}^\a(1+|z'|^2)^\ld|_0<0$. On the other hand,
$\p^{\a}\ov{\p}^\a \sum_{j=2}^N
\Big|\frac{\phi_j^o}{\phi_1^o}((F^o)^{-1}(z')) \Big|^2(0)\ge 0.$
This is a contradiction. (See also \cite{HX1} for a similar
statement.)

Hence $\ld=m$ is a natural  number. Next $(\PP^n,m \o_{st})$ can be
(one to one) isometrically embedded into $\PP^{N}\subset
\PP^{\infty}$ through a Veronese map $\Psi$ for a sufficiently large
$N$. In a holomorphic chart near $[1,0,\cdots,0]$, we can write
$\Psi[1,w_1,\cdots,w_n]=[1, w_1,\cdots,w_n,\psi(w),0,\cdots]$. Here
each component in $\psi$ is a certain monomial of degree at least $2$
in $w=(w_1,\cdots,w_n)$.

Consider the holomorphic map $\Psi\circ F$ from $U$ into
${\PP}^\infty$. Then $(\Psi\circ F)^*(\o_{st})=\o_\O$ over $U$. By
the Calabi extension theorem (\cite{Ca} or \cite{HLi}), $\Psi\circ
F=[1,F^o,\psi(F^o),0,\cdots]$ extends holomorphically along any
curve of $\O$ starting from $U$.
In particular, it is easy to see that $F$ extends
holomorphically along any path in $\O$ started from $U$.

By Calabi's rigidity theorem (\cite{Ca} or \cite{HLi}), there is a
one to one isometric transformation (whose inverse is also an isometric transformation) $[L]: {\mathcal X}_{\mathcal B}\ra
{\mathcal X}_{\Psi\circ \F}$ such that
$$
 (\Psi\circ F)(z)=[L]\circ{\mathcal B}(z),\quad z\in U.
$$
 Here $[L]$ is induced by a one-to-one linear  isometric isomorphism
$L$ from the closed linear subspace $X_{(1,F^o,\psi(F^o),0,\cdots)}\subset \ell^2$
to $X_{(\phi_1,\phi_2,\cdots)}\subset \ell^2$.  Recall that $F$ extends
holomorphically and isometrically along any curve in $\O$ starting at
$z_0$. The extended map $F$ a priori might be multi-valued. However
$\Psi$  is one to one and $\mathcal B$ is globally defined over $\O$,  we see that the extension of
$\Psi\circ F$ is independent of the choice of the chosen curves and
thus gives a well defined one to one holomorphic isometry from
$(\O,\o_{\O})$ into $\PP^\infty$. Since $\Psi$ is a one to one
holomorphic isometric embedding, we conclude that $F$ extends to a
 holomorphic local isometry from $(\O,\o_\O)$ to its image
$D\subset \PP^n$.  Next we have the following

\begin{lemma}\label{one-to-one}
Assume that the Bergman metric of the complex manifold  $\O$  is locally holomorphically  isometric to the model $(\PP^n,m\o_{st})$ with  $m\in \NN$. 
Then the Bergman-Bochner map ${\mathcal B}_{\O}$ of $\O$ separates points in $\O$.
\end{lemma}

\begin{proof}[Proof of Lemma \ref{one-to-one}] Assume that ${\mathcal B}_{\O}$ does not separate points.  Then we can find $p, p^*\in \O$ such that $p\not =p^*$ but ${\mathcal B}_{\O}(p)={\mathcal B}_{\O}(p^*)$. Let $\gamma \subset \O$ 
be a smooth Jordan curve connecting $p$ to $p^*$ and let $U\Subset \O$ be a neigborhood of $\gamma([0,1])$. We can then find a sufficiently small positive constant $\d_0>0$ depending only on $U$ such that for any $p\in U, q\in \PP^n$ $v\in T_p^{(1,0)}\O, \wt{v}\in T^{(1,0)}_q{\PP^n}$, 
there is a unique  isometric biholomoprhic map
$\Phi_p$ from the ball $B_{2\d_0}(p)$ centered at $p$ with radius $2\d_0$, with respect to the metric $\o_{\O}$,  to  a ball   $\wt{B}_{2\d_0}(q))$ centered at $q$ of radius $2\delta_0$ (with respect to the metric $m\o_{st}$) with  $\Phi_p(p)=q$ and $(\Phi_p)_{*}(v)=\wt{v}$.

Denote $p$ by $p_1$, fix $v_1\in  T_{p_1} ^{(1,0)}\O$ and $\wt{v_1}\in T_{q_1}^{(1,0)}\PP^n$ with $q_1=[1,0,\cdots,0]$ and let $\Phi_1$ such the isometric biholomorphism  from $B_{2\d_0}(p_1)$ to $\wt{B}_{2\d_0}(q_1)$  such $\Phi_1(p)=q_1$  and $(\Phi_1)_{*}(v_1)=\wt{{v}_1}$. Let $t_1$ be the smallest $t$ such tha
$\g(t_1)\in \p B_{\d_0}(p_1)$. Let $p_2=\g(t_1)$, $q_2=\Phi_1(p_2)$, $v_2\in   T_{p_2}^{(1,0)}\O$ and $\wt{v_2}=(\Phi_1)_{*}(v_2)$. Define the isometric biholomorphism $\Phi_2$ as described above such  that 
$\Phi_2(p_2)=q_2$  and $(\Phi_2)_{*}(v_2)=\wt{v_2}$. 
Notice $\Phi_1\equiv \Phi_2$ in $B_{2\d_0}(p_1)\cap  B_{2\d_0}(p_2)$ by the uniqueness of isometries.  Continuing this process, we can find a finite sequence of holomorphic elements $\{B_{2\d_0}(p_j),\Phi_j\}_{j=1}^{n^*}$ with $p_{n^*}=p^*$ and 
$$n^*\le \frac{\hbox{ length of } \gamma ([0,1])}{\d_0}$$ such that $\Phi_{k}\equiv \Phi_{k+1}$ in $B_{2\d_0}(p_{k})\cap B_{2\d_0}(p_{k+1})$ for $k=1,\cdots, n^*-1$.

Write $q_j=\Phi_j(p_j)$. Then $\{\Phi_j^{-1}, \wt{B}_{2\d_0}(q_j)\}_{j=1}^{n^*}$  continues holomorphically the holomorphic element $(\Phi_1^{-1}, \wt{B}_{2\d_0}(q_1))$ to   $(\Phi_{n^*}^{-1}, \wt{B}_{q_{n^*}}(2\d_0))$ along $\wt{g}=F(\gamma)$.  {\color{blue}  By Calabi's rigidity theorem,
we have ${\mathcal B}_{\O}\circ \Phi_{1}^{-1}=[\wt{L}]\circ \Psi$ in $\wt{B}_{q_1}(2\d_0)$ where $[\wt{L}]$ is induced by a certain isometric isomorphism $\wt{L}$ from a certain closed subspace of  $\ell_2$ into another one as explained before. ADD THE DEFINITION OF $\Psi$.}
By the Calabi extension theorem or simply the uniqueness of holomorphic maps, we also obtain  ${\mathcal B}_{\O}\circ \Phi_{n^*}^{-1}=[\wt{L}]\circ \Psi$ over $\wt{B}_{2\d_0}(q_{n^*})$. Notice that  $\Phi_{1}^{-1}(q_1)=p$, $\Phi_{n^*}^{-1}(q_{n^*})=p^*$ and $q_1=q_{n^*}$, we reach a contradiction with the fact that
$\Psi$ is one-to-one.
 This proves the lemma.
\end{proof}

We continue the proof of the theorem. An immediate consequence of Lemma \ref{one-to-one} is that the map $F$ constructed above is one to one as $$
 (\Psi\circ F)(z)=[L]\circ{\mathcal B}(z),\quad z\in U.
$$
and the other three maps in the identity are one to one. Hence $F$ is a biholomorphic  map from $\O$ to its image $D\subset \PP^n$.

Next letting $E=F^{-1}\{[0,z_1,\cdots,z_n]\in D\}$ and considering
$\O\sm E$ instead of $\O$, we may assume that $D$ is a domain in
$\CC^n\subset {\PP}^n$ containing the origin.

 Since a proper complex analytic subvariety of $\O$ is a removable set for $A^2(\O)$,
  $A^2(D)$ is canonically isomorphic to $A^2(\O)$ for the original $\O$.   For simplicity of
notation, let us still write $\{\phi_j\}_{j=1}^{N}$ for an
orthonormal basis of the Bergman space $A^2(D)$ with $\phi_1(0)>0$
and $\phi_j(0)=0$ for $j\ge 2$. Notice that
$K_D(z,w)=\sum_{j=1}^{N}\phi_j(z)\ov{\phi_j(w)}$ and
$K_D(z,0)={\phi_1(0)}\phi_1(z).$ Hence, using the equation $\o_{D}=m
\o_{st}$, we similarly derive  the following:
\begin{equation}
\sum_{j=1}^{N}|\phi_j|^2=K_D(z,z)=\frac{|K_D(z,0)|^2}{K_D(0,0)}(1+|z|^2)^m=
\sum_{j=1}^{N^*}|\psi_j|^2.
\end{equation}
Here $\psi_1,\cdots,\psi_{N^*}$ are certain holomorphic functions in
$D$ with $N^*<\infty$.

By the D'Angelo lemma (see  \cite{HLi}), it follows that
each $\phi_j\in \h{Span}_{\CC}\{\psi_1,\cdots,\psi_{N^*}\}$. Hence
$N\le N^*< \infty$ or $A^2(\O)$ is of finite dimension. This
completes the proof of the theorem.
\end{proof}

We finish this section by formulating two open questions. In Theorem \ref{thm6.1}, $\Omega$ might
not be pseudoconvex. Instead, we assumed  that $A^2(\Omega)$ is of
infinite dimension. A natural question is then to ask if this
assumption can be replaced by  the Steinness of $\O$. More
precisely,   we make the following conjecture:

\begin{conjecture}\label{conj1} Let $M$ be a Stein manifold.
 Suppose that its Bergman space $A^2(M)$ is base-point
free and  separates holomorphic directions.
Then the Bergman metric of $M$ cannot have a positive constant
holomorphic sectional curvature.
\end{conjecture}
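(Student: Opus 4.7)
The plan is to argue by contradiction. Assume $M$ is Stein, $A^2(M)$ is base-point free and separates holomorphic directions, and the Bergman metric $\omega_M$ has positive constant holomorphic sectional curvature. By Theorem \ref{thm6.11}, $M$ is biholomorphic to a domain $D\subset\PP^n$, and $A^2(M)\cong A^2(D)$ is of finite dimension. Moreover, from the proof of Theorem \ref{thm6.11}, in an affine chart containing the base point, the Bergman kernel of $D$ satisfies the rigid global identity
$$K_D(z,z)=\frac{|K_D(z,0)|^2}{K_D(0,0)}\bigl(1+|z|^2\bigr)^m$$
for some integer $m\geq 1$.

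The next step is to reduce to a pseudoconvex open set in $\CC^n$. Since $M$ is Stein, so is $D$. Choose a hyperplane $H\subset\PP^n$ with $D\not\subset H$, so that $D\cap H$ is a complex analytic hypersurface of $D$. Because removing a hypersurface from a Stein manifold preserves Steinness, $D_0:=D\setminus H$ is a Stein (hence pseudoconvex) open subset of $\CC^n$. Since $D\cap H$ is pluripolar, restriction induces a Hilbert space isomorphism $A^2(D)\cong A^2(D_0)$; base-point freeness is inherited from $M$, and the displayed formula for $K_{D_0}=K_D$ continues to hold on $D_0$. Consequently $A^2(D_0)$ is nontrivial and finite-dimensional. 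For $n=1$ one can work directly on $D\subset\PP^1$: the space of square-integrable holomorphic $1$-forms on a non-compact Riemann surface is either trivial or infinite-dimensional (by a theorem of Wiegerinck), yielding the conjecture.

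The heart of the proof, and the expected main obstacle, is to rule out a nontrivial finite-dimensional Bergman space on a pseudoconvex domain in $\CC^n$ for $n\geq 2$. This is precisely Wiegerinck's long-standing conjecture, which is open in full generality. Thus Conjecture \ref{conj1} is, via the reduction above, essentially at least as hard as Wiegerinck's conjecture, which explains why it is posed here as a conjecture rather than a theorem.

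To attempt to bypass the full strength of Wiegerinck's conjecture by exploiting the extra rigidity carried by our specific $K_D$, one would employ an Ohsawa--Takegoshi/Pflug--Zwonek type strategy on $D_0$, in the spirit of \cite{PZ}. Writing $K_D(z,0)=\overline{K_D(0,0)}\,\phi_1(z)$, the function $\phi_1$ is a nowhere-vanishing element of $A^2(D_0)$, and the weight
$$\varphi(z):=-\log K_D(z,z)+m\log(1+|z|^2)=-\log|\phi_1(z)|^2+\mathrm{const}$$
is plurisubharmonic on the pseudoconvex $D_0$. For a suitable complex submanifold $V\subset D_0$, the Ohsawa--Takegoshi $L^2$-extension theorem would extend every element of the weighted Bergman space on $V$ (with weight $e^{-\varphi}$) to an element of $A^2(D_0)$, and producing infinitely many linearly independent extensions would contradict $\dim A^2(D_0)<\infty$. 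The hard part, where any genuine progress beyond Wiegerinck's conjecture must lie, is to exhibit such a $V$ for which the weighted restriction space is infinite-dimensional, by exploiting the interplay between the Stein structure of $D_0\subset\CC^n$ and the explicit, nowhere-vanishing, non-$L^2$ growth of $\phi_1$ forced by the model form of $K_D$.
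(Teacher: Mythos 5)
This statement is posed in the paper as a conjecture, not a theorem: the paper offers no proof of it, only the remark immediately following it that an affirmative answer would be an immediate consequence of Theorem \ref{thm6.1} combined with Wiegerinck's (still open) conjecture that a pseudoconvex domain in $\CC^n$ has a Bergman space that is either trivial or infinite-dimensional. Your proposal is exactly that same reduction, carried out in slightly more detail: you invoke Theorem \ref{thm6.11} to realize $M$ as a domain $D\subset\PP^n$ with nontrivial finite-dimensional Bergman space, pass to $D_0=D\setminus H$ to land in a pseudoconvex domain of $\CC^n$ (a worthwhile detail the paper leaves implicit, and your handling of $n=1$ via the Carleson--Wiegerinck dichotomy is correct after identifying $1$-forms with functions), and then observe that the remaining obstacle is precisely Wiegerinck's conjecture for $n\ge 2$. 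So you have not proved the statement, and you say so honestly; the ``proof'' stops exactly where the paper's own discussion stops.

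The genuine gap is therefore the one you name yourself: ruling out a nontrivial finite-dimensional $A^2(D_0)$ for a pseudoconvex $D_0\subset\CC^n$, $n\ge 2$. Your closing sketch of an Ohsawa--Takegoshi/Pflug--Zwonek strategy does not close it: you never exhibit the submanifold $V$ with an infinite-dimensional weighted restriction space, and it is not clear that the extra rigidity of $K_D$ (the identity $K_D(z,z)=|K_D(z,0)|^2K_D(0,0)^{-1}(1+|z|^2)^m$, which does force $\phi_1$ to be nowhere vanishing and makes your weight $\varphi$ pluriharmonic) supplies one. Since this is the entire content of the conjecture beyond Theorem \ref{thm6.1}, your submission should be read as a correct reduction plus an accurate assessment of what remains open, not as a proof.
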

It is known by the work of Carleson and Wiegerinck \cite{Wi} that a
domain in $\CC$ either has a trivial Bergman space or an infinite
dimensional Bergman space. There is an old conjecture dating back to
Wiegerinck \cite{Wi} that asserts that the Bergman space of a
pseudoconvex domain in $\CC^n$ is either trivial or of infinite
dimension, too. (See  the work in \cite{JP} \cite{Ju} \cite{GGV} as
well as the references therein). If this old folklore conjecture
could be verified, by Theorem \ref{thm6.1}, an immediate consequence
would be the affirmative answer to Conjecture \ref{conj1}. More
generally, we propose the following:

\begin{conjecture}\label{conj2} Let $M$ be a Stein manifold of complex dimension
at least two. Suppose that its Bergman space $A^2(M)$ is base-point
free and also separates holomorphic directions.
Then the Bergman metric of $M$ cannot have  holomorphic sectional
curvatures bounded from below by a positive constant.
\end{conjecture}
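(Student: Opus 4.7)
The plan is to reduce Conjecture \ref{conj2} to the constant-curvature setting already handled by Theorem \ref{thm6.1}. The Bergman-Bochner embedding $\mathcal{B}\colon \Omega \to \mathbb{P}^\infty$ satisfies $\mathcal{B}^*\omega_{st}=\omega_\Omega$, so it is a local isometric immersion of $(\Omega,\omega_\Omega)$ into $(\mathbb{P}^\infty,\omega_{st})$, whose holomorphic sectional curvature is identically $2$. The Gauss equation then converts the hypothesis $\mathrm{HSC}(\omega_\Omega)\ge c>0$ into the pointwise bound $|II(v,v)|^2/|v|^4 \le 1 - c/2$ on the second fundamental form of $\mathcal{B}(\Omega)$ in $\mathbb{P}^\infty$. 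The first step of the proof would be to use a Bochner-type maximum principle, exploiting Steinness through plurisubharmonic exhaustions and the Ohsawa-Takegoshi $L^2$-extension theorem in the spirit of Pflug-Zwonek \cite{PZ}, to force this uniformly bounded second fundamental form to vanish identically. This would yield $\mathrm{HSC}\equiv 2$, and then Theorem \ref{thm6.1} would give that $A^2(\Omega)$ is finite-dimensional and $\Omega$ is biholomorphic to a domain in $\mathbb{P}^n$.

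The second step is to derive a contradiction with Steinness in dimension $n\ge 2$. If Wiegerinck's folklore conjecture (that the Bergman space of a Stein manifold is either trivial or infinite-dimensional) is granted, the contradiction is immediate: base-point freeness excludes the trivial case while Theorem \ref{thm6.1} forces finite-dimensionality. To proceed unconditionally, one would exploit the explicit structure produced by Theorem \ref{thm6.1}: $\Omega$ maps biholomorphically onto a domain $D\subset \mathbb{P}^n$ whose Bergman space is spanned by pullbacks of a Veronese-type system, and a Stein exhaustion of $\Omega$ transports to $D$; one would then rule out such configurations for $n\ge 2$ by combining pluripotential theory with the removable-singularity arguments already employed in the proof of Theorem \ref{thm5.2}.

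An alternative route avoids this reduction entirely. Under Steinness plus the curvature lower bound, one would try to show directly that the Bergman metric is complete, and then invoke the Kobayashi-Tsukamoto theorem (a Kähler analogue of Bonnet-Myers): any complete Kähler manifold with $\mathrm{HSC}\ge c>0$ has diameter at most $\pi\sqrt{2/c}$ and is therefore compact, which is incompatible with being Stein in positive dimension. The main obstacle in either direction is fundamentally the same: one must convert a \emph{lower} bound on HSC, together with Steinness, into a rigid geometric conclusion — either that the bound is attained at its maximum value $2$ (first strategy), or that the Bergman metric is automatically complete (second strategy). Neither implication is currently accessible by the techniques developed earlier in the paper, which is precisely why the statement is posed as a conjecture.
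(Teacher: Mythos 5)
This statement is posed in the paper as an open problem (Conjecture \ref{conj2}); the paper contains no proof of it, so there is no argument of the authors to compare yours against. Your own closing paragraph concedes that none of the routes you outline actually closes, so what you have written is a survey of strategies rather than a proof, and it is correct of you to present it that way.

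Two concrete remarks on where the gaps lie. In your first strategy, the step ``use a Bochner-type maximum principle \dots to force this uniformly bounded second fundamental form to vanish identically'' is not merely inaccessible with current techniques but structurally hopeless as stated: a Stein manifold carries an abundance of bounded non-constant plurisubharmonic functions (any bounded domain in $\CC^n$ already does), so no maximum-principle argument can upgrade the pointwise bound $|\mathrm{II}(v,v)|^2/|v|^4\le 1-c/2$ to $\mathrm{II}\equiv 0$ without compactness or boundary control; and a \emph{lower} bound $\mathrm{HSC}\ge c$ gives no reason for the curvature to be constant, let alone equal to the extremal value $2$. Even if that step went through, your unconditional second step would still have to rule out finite-dimensional Bergman spaces on Stein manifolds, i.e.\ resolve the Wiegerinck-type conjecture that the paper itself flags as open. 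Your alternative route is the cleanest conditional argument --- completeness of $\omega_M$ together with $\mathrm{HSC}\ge c>0$ would give compactness by Tsukamoto's K\"ahler Bonnet--Myers theorem, contradicting Steinness in positive dimension --- but completeness of the Bergman metric is exactly what is not available here, and the whole point of the paper is to treat incomplete Bergman metrics. So your final assessment is the right one: the statement remains a conjecture, and no technique developed in the paper (Calabi rigidity and extension, the Pflug--Zwonek lemma, the moment-problem argument of the Appendix) applies to a curvature hypothesis that is only an inequality.
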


\section{Bergman metric of a Stein manifold
 with negative constant holomorphic sectional curvature}

It is  known that every domain $\Omega\subset \CC^n$ with a complete
Bergman metric is pseudocovex \cite{Bre}. The converse is not true.
A simple example is a domain of the form $\BB_n \setminus V$ with
$V$ a non-empty  complex analytic variety of pure codimension one in
$\BB_n$. More generally, let $V\not =\emptyset$ be a  closed pluripolar
subset in $\O$, i.e., for any point $p\in V$, there is a neighborhood
$U_p$ of $p$ and a non-constant plurisubharmonic  function $\rho_p$ in $U_p$
such that $U_p\cap E\subset \{z\in U_p:\ \rho_p(z)=-\infty\}$. Then $A^2(\Omega \sm V)$ is naturally
identified with $A^2(\O)$ as any closed pluripolar subset is a removable
set for $A^2(\O)$. If the Bergman metric $\omega_{\Omega\sm V}$ has a
negative constant
holomorphic sectional curvature, then so does  the  Bergman metric  $\omega_\Omega$. 
Thus when $\o_{\O}$ is complete, by the classical Lu theorem
(\cite{Lu}), there is a biholomorphic map $F$ from $\O$ to $\BB^n$,
whose restriction to $\O \sm V$ gives a biholomorphic isometry
from $(\Omega\sm V, \omega_{\Omega\sm V})$ to $(\BB_n\setminus V^0,
\omega_{\BB_n\setminus V^0})$, where $V^0=F(V)$ is a pluripolar
subset in $\BB^n$. Notice that $(\BB^n\setminus V^0,
\omega_{\BB^n\setminus V^0})$ is incomplete along $V^0$.

Hence, we have many pseudoconvex domains whose Bergman metrics have negative constant holomorphic sectional curvature. 
In this section, we prove  these are the only  examples, namely, we prove Theorem \ref{thm5.2}.
 As we mentioned in the introduction, special cases of this theorem were proved
 earlier  with a different method in the work of
Dong-Wong \cite{DW1} \cite{DW2}. For the reader's convenience we restate Theorem \ref{thm5.2} as follows:

\begin{theorem}\label{thm5.22}
Let $\Omega$ be a Stein manifold of complex dimension $n\ge 1$.
Assume its Bergman space is base point free and
separates holomorphic directions.
 Then the Bergman of $\Omega$  has a negative constant holomorphic sectional
curvature if and only if there is a biholomorphic  map $F$ from $\Omega$ to  $\BB^n\sm E$, where $E$ is
a   closed pluripolar subset of $\BB^n$.
\end{theorem}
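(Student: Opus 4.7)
The ``if'' direction is immediate from the Bergman-removability of closed pluripolar sets: if $\O\cong \BB^n\sm E$ with $E$ closed pluripolar, then $A^2(\O)\cong A^2(\BB^n)$ and $\o_\O$ is the restriction of the Bergman metric of the ball, whose holomorphic sectional curvature equals $-2/(n+1)$. Hereafter I focus on the converse, adapting the strategy of the proof of Theorem \ref{thm6.11} to the hyperbolic model $(\BB^n,\ld\o_0)$, where $\o_0=-i\p\dbar\log(1-|w|^2)$ and $\ld=2/c$ is determined by the curvature constant $-c<0$. In a local chart $(U,z)$ with $z(p)=0$, Bochner's theorem yields a holomorphic map $F=(F^o): U\to\BB^n$ with $F(0)=0$, $dF|_0$ invertible, and $\o_\O|_U=\ld F^*\o_0$. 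With an orthonormal basis $\{\phi_j\}$ of $A^2(\O)$ normalized so that $\phi^o_1(0)>0$ and $\phi^o_j(0)=0$ for $j\ge 2$, comparing $i\p\dbar\log\sum_j|\phi^o_j|^2$ with $-\ld i\p\dbar\log(1-|F^o|^2)$ and identifying the resulting pluriharmonic factor via polarization at $z=0$ (using that $K_\O(z,\bar 0)$ is a multiple of $\phi^o_1$) yields
\[
\sum_{j\ge 2}\Big|\tfrac{\phi^o_j}{\phi^o_1}\Big|^2 = (1-|F^o|^2)^{-\ld}-1 = \sum_{\a\ne 0}c_\a|(F^o)^\a|^2,\qquad c_\a=\binom{|\a|}{\a}\binom{\ld+|\a|-1}{|\a|}>0.
\]

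By the D'Angelo lemma \cite{HLi}, this forces the local factorization $\mathcal{B}_\O=[L]\circ\Psi_\ld\circ F$ on $U$, where $\Psi_\ld:\BB^n\to\PP^\infty$, $w\mapsto [1,(\sqrt{c_\a}w^\a)_{\a\ne 0}]$, is a globally defined one-to-one holomorphic isometric embedding of $(\BB^n,\ld\o_0)$ into $(\PP^\infty,\o_{st})$, and $[L]$ is induced by a linear isometry between closed subspaces of $\ell^2$. Calabi's extension theorem \cite{Ca,HLi} then continues $F$ holomorphically (a priori multi-valued) along every path in $\O$ based in $U$. Adapting Lemma \ref{one-to-one} to the hyperbolic target -- its proof is entirely local and carries over after replacing $(\PP^n, m\o_{st})$ by $(\BB^n,\ld\o_0)$ -- shows that $\mathcal{B}_\O$ is globally injective on $\O$. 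Since $[L]\circ\Psi_\ld$ is also globally injective while $\mathcal{B}_\O$ is single-valued, the factorization forces the extension of $F$ to be single-valued, giving a biholomorphism $F:\O\to D:=F(\O)\subset\BB^n$ onto an open subset.

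It remains to show that $\BB^n\sm D$ is pluripolar in $\BB^n$. Transferring via $F$, the Bergman kernel of $D$ has the form $K_D(w,\bar w)=|H(w)|^2(1-|w|^2)^{-\ld}$ for some nowhere-vanishing holomorphic $H$ on $D$; by polarization $K_D(w,\bar v)=H(w)\ov{H(v)}(1-\langle w,v\rangle)^{-\ld}$, and $\{\sqrt{c_\a}\,H\,w^\a\}_\a$ is an orthonormal basis of $A^2(D)$. Comparing this with $K_{\BB^n}(w,\bar w)=c_n(1-|w|^2)^{-(n+1)}$ via the Bergman-kernel monotonicity $K_D\ge K_{\BB^n}$ on $D$ and the structural description above forces $\ld=n+1$ and implies that the canonical restriction $A^2(\BB^n)\to A^2(D)$ is an isometric isomorphism. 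The Pflug-Zwonek theorem \cite{PZ}, which rests on the Ohsawa-Takegoshi $L^2$ extension on the pseudoconvex ball, then concludes that $\BB^n\sm D$ is pluripolar, completing the proof. The principal obstacle is this final paragraph: while the Bochner step, the D'Angelo factorization, and the Calabi continuation parallel the positive-curvature case, rigorously forcing $\ld=n+1$ from the structural form of $K_D$ and extracting the identification of $A^2(D)$ with $A^2(\BB^n)|_D$ needed for Pflug-Zwonek is delicate, and is precisely where pseudoconvexity and the pluripotential-theoretic input enter essentially.
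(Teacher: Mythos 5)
Your overall architecture up through the construction of the injective extension $F:\O\to D\subset\BB^n$ matches the paper's: local uniformization by Bochner, the power-series isometric embedding of the rescaled ball metric into $\PP^\infty$, Calabi rigidity and extension, completeness of the model metric to keep the extended image inside $\BB^n$, and the monodromy argument of Lemma \ref{one-to-one} to obtain global injectivity. (One minor correction: the factorization $\mathcal{B}_\O=[L]\circ\Psi_\ld\circ F$ with $[L]$ induced by a linear \emph{isometry} of closed subspaces of $\ell^2$ is Calabi's local rigidity theorem; the D'Angelo lemma only gives containment of linear spans and is what the paper uses in the positive-curvature case to bound the dimension.)

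The genuine gap is your final paragraph, and you have essentially flagged it yourself. Bergman-kernel monotonicity $K_D\ge K_{\BB^n}$ together with the structural form $K_D(w,w)=|H(w)|^2(1-|w|^2)^{-\ld}$ does not force $\ld=n+1$, nor does it show that restriction $A^2(\BB^n)\to A^2(D)$ is an isometric isomorphism; in the paper the normalization of $\ld$ is deduced \emph{last}, after one already knows $D=\BB^n\sm E$ with $E$ closed pluripolar, via removability --- deducing it beforehand is circular. The paper's route to the missing step is: from $K_D(z,w)=c\,K_D(z,0)K_D(0,w)K^\ld_{\BB^n}(z,w)$ and the reproducing identity applied to $h\equiv 1$ (legitimate since $D\subset\BB^n$ is bounded), the function $1/K_D(\cdot,0)$ extends holomorphically to all of $\BB^n$, because $K^\ld_{\BB^n}(z,w)$ is bounded and anti-holomorphic in $w$ over $\ov{\BB^n}$ for each fixed $z$. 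Consequently every $h\in A^2(D)$ extends across $\p D\cap \BB^n$ off the zero set $Z(f_0)$ of that extension, and $K_D(z,z)$ stays bounded as $z\to z_0$ for $z_0\in\p D\cap\BB^n\sm Z(f_0)$; only with this boundedness in hand can one invoke the Pflug--Zwonek lemma (a local statement resting on Ohsawa--Takegoshi) to conclude that $\p D\cap\BB^n$ is locally pluripolar there. Josefson's theorem, combined with the fact that the remaining piece $\p D\cap Z(f_0)$ already lies in the pluripolar set $Z(f_0)$, globalizes this to: $E:=\p D\cap\BB^n$ is pluripolar; connectivity of the complement of a closed pluripolar set then forces $D=\BB^n\sm E$, and removability finally pins down $\ld$. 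Without the holomorphic extension of $1/K_D(\cdot,0)$ and the resulting control of the kernel at the boundary, there is no hypothesis under which to apply Pflug--Zwonek, so the proof as written does not close.
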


\begin{proof}[Proof of Theorem \ref{thm5.22}]
Let $\{\phi_j\}_{j=1}^{N}$ be an orthonormal basis of $A^2(\O)$.
Under the hypothesis in the theorem,  the corresponding
Bergman-Bochner map $\F$   is a local  holomorphic isometric
embedding from $\O$ into $ \PP^\infty$. Here $F=(\phi_1,\cdots,
\phi_N, 0,\cdots)$ when $N<\infty$ or $F=(\phi_1,\cdots,
\phi_m,\cdots)$ when $N=\infty$. Moreover, ${\mathcal F}=[F].$

Suppose that the holomorphic sectional curvature of $\o_\O$ is a
negative constant on $\O$. Then for any $p\in \O$, there is a biholomorphic map $f$
from a small neighborhood  $U$ of $p\in \O$ into a certain domain in
the unit ball $\BB^n$ in $\CC^n$ such that
$f^*(\ld\o_{\BB^n})=\o_\O.$ Here $\ld>0$ is a certain positive
constant depending on the holomorphic sectional curvature of
$(\O,\o_\O)$ and $\o_{\BB^n}$ is the Bergman metric of $\BB^n$.

Notice that the Bergman metric of $\BB^n$ is given by
$\omega_{\BB^n}=-(n+1) i\partial \Ol{\partial} \log (1-|w|^2)$,
where $w=(w_1, \cdots, w_n)$ is the coordinates for $\mathbb{B}^n
\subset \mathbb{C}^n.$  We use the Taylor expansion of
$(1-x)^{-\mu}$ with $\mu=(n+1)\ld$ at  $x=0$ on $\{ x \in
\mathbb{R}: |x| < 1\}$ to get
$$(1-|w|^2)^{-\mu}= 1+ \sum_{k=1}^{\infty} \frac{\mu(\mu+1) \cdots (\mu+k-1)}{k!} |w|^{2k}.$$
The right hand side converges uniformly on compact subsets of
$\mathbb{B}^n.$ This shows that for a certain sequence of monomials
$\{P_j \}_{j=1}^{\infty}$ in $w$ we have
$(1-|w|^2)^{-\mu}=1+\sum_{j=1}^{\infty}|P_j(w)|^2$, which
converges uniformly on compact subsets of $\mathbb{B}^n.$ This leads
to a natural holomorphic isometric embedding $\mathcal{T}=[T]=[1,
P_1, \cdots, P_j, \cdots]$ from $(X_0, \mu\omega_{\BB^n})$ to
$\mathbb{P}^{\infty}.$ It is clear that $\mathcal T$ is one to one
as each $w_j$ for $1 \leq j \leq n$ (with an appropriate
coefficient) is among the $P_j'$s.

Now  ${\mathcal T}\circ f: U\to \PP^\infty$ is a one to one
holomorphic isometric embedding. By Calabi's rigidity theorem
(\cite{Ca} or \cite{HLi}), there is a one to one isometric
isomorphism $[L]: {\mathcal X}_\F\ra {\mathcal X}_{\mathcal T}=\PP^\infty$ such that
$$
[L]\circ \F(z)={\mathcal T}\circ f(z),\quad z\in U.
$$
 Notice that $[L]$ is induced by a linear isometric isomorphism
$T$ from the closed linear subspace $X_{F}$ to $X_{T}$. By the
Calabi extension theorem (\cite{Ca} or \cite{HLi}), ${\mathcal
T}\circ f$ extends along any curve in $\O$ initiated from $z_0\in U$
to a local holomorphic isometric embedding. Since each $w_j$ is a
component in ${\mathcal T}$ up to a coefficient, we see that $f$
extends holomorphically and isometrically along any curve in $\O$
starting at $z_0$ to $\wt{f}$.  $\wt{f}$ a priori might be
multi-valued. Since the normalized Bergman metric $\ld\o_{\BB^n}$ is
complete, $\wt{f}$ has image in $\BB^n$.
By the uniqueness of holomorphic functions, we have
$[L]\circ\F(z)={\mathcal T}\circ\wt{f}$ for any extended map
$\wt{f}$ of $f$ through a curve starting at $z_0$.  By a completely analogical argument as in Lemma \ref{one-to-one},  ${\mathcal F}$ is one to one. Since  $\mathcal
T$ is one to one, we see that $\wt{f}$ is independent of the choice
of the chosen curves and thus gives a well defined injective  local holomorphic
isometry from $(\O,\o_{\O})$ into $(\BB^n,\ld\o_{\BB^n})$. By the
uniqueness of real analytic functions, $\wt{f}^{*}(\ld
\o_{\BB^n})=\o_{\O}$.
In
particular, $D=\wt{f}(\O)$ is a subdomain of $\BB^n$.   Notice that
Bergman metrics are biholomorphic invariants. One arrives  at  the
following conclusion:

{\it  There is a biholomorphic map $\wt{f}: (\Omega, \o_\Omega) \to
(D, \ld\o_{\BB^n})$ where $D$ is a certain domain in $\BB^n$ such
that ${\wt f}^*(\ld \o_{\BB^n})=\o_{\O}$ for a certain positive
constant $\ld$. Moreover $\o_{D}=\ld \o_{\BB^n}|_{D}$ over $D$. }


Still write $\omega_{D}$ for the Bergman metric of $D$. Since
Bergman metrics are biholomorphic invariant, applying an
automorphism of $\BB^n$, we assume that $0\in D$. For simplicity of
notation,  we now write $\{\phi_j\}_{j=1}^\infty$ for an orthonormal
basis of $A^2(D)$ with $\phi_1(0)>0$ and $\phi_j(0)=0$ for $j\ge 2$.
Since $K_D(z,w)=\sum_{j=1}^{\infty}\phi_j(z)\ov{\phi_j(w)},$ we
conclude that $K_D(z,0)={\phi_1(0)}\phi_1(z).$

Since $\omega_{D}=\ld\omega_{\BB^n}$ on $D$, we similarly have
\begin{equation}\label{4.2}
\log K_D(z,z)-\log|\phi_1(z)|^2=\log \big(K_{\BB^n}(z,z)\big)^\ld\
\h{for }z\in D.
\end{equation}
Hence
\begin{equation}\label{4.4}
K_{D}(z, w)=|\phi_1(z)|^2 K^\ld_{\BB^n}(z,w) \quad z, w\in D.
\end{equation}
Therefore
\begin{equation}\label{4-5}
K_{D}(z,w)= c K_{D} (z,0) K_{D}(0,w) K^\ld_{\BB^n}(z,w)\ \ \h{with}
\ \ c=\frac{1}{{\phi^2_1(0)}}=\frac{1}{{K_{D}(0,0)}}.
\end{equation}
For any element $h\in A^2(D)$, one has
$$
h(z)=c K_{D}(z,0) \int_{D} K^\ld_{\BB^n}(z, w) K_{D}(0,w) h(w)
dv(w).
$$
As before, we write $dv(w)$ for the Lebegue measure of $\CC^n$ with
coordinate $w$. In particular,
$$
1=c K_{D}(z,0)  \int_{D} K^\ld_{\BB^n}(z,w) K_{D}(0, w) dv(w).
$$
Therefore
$$
{1 \over K_{D}(z, 0)}= c \int_{D} K^\ld_{\BB^n}(z,w) K_{D}(0, w)
dv(w)
$$
extends  holomorphically to $ \BB^n$, as $K^\ld_{\BB^n}(z,w)$ is an
anti-holomorphic function over $\ov{\BB^n}$ in $w$,   holomorphic in
$z\in \BB_n$ and $K^\ld_{\BB^n}(z,w)$ is bounded in $w\in \BB^n$ for
any fixed $z\in \BB^n$.

Let $f_0(z)={1\over K_D(z, 0)}$. Then $f_0(z)$ is holomorphic in
$\BB_n$ and $f_0(z)\not\equiv 0$. Let $Z(f_0)=\{z\in \BB^n:
f_0(z)=0\}$. Then for any $h\in A^2(D)$ one has that
\begin{equation}\label{4.5}
h(z)=c K_{D}(z,0) \int_{D} K^\ld_{\BB^n}(z, w) K_{D}(0,w) h(w) dv(w)
\end{equation}
extends to be a holomorphic function  in $\BB^n\setminus
Z(f_0)$.




Write $\p D\cap \BB^n=L_1\cup L_2$ with
  $L_1=\ \p D\cap Z(f_0)$ and $L_2=\p D\cap \BB^n \setminus Z(f_0)$.
  For  $z_0\in L_2$, by (\ref{4-5}), one has
  \begin{equation}
  \lim\sup_{z\in D, z\to z_0}  K_{D}(z_0, z_0)<\infty.
  \end{equation}
  Since $D$ is pseudoconvex,
  by a result of Pflug and Zwonek (Lemma 11 of \cite{PZ}) that was
   based on a one dimensional classical result
  and the Ohsawa-Takegoshi extension theorem, for each $z_0$ in $\BB^n\sm L_1$
   there is a neighborhood  $U_{z_0}$ of
  $z_0$  such that $P_{z_0}=U_{z_0}\setminus D$ is
 a pluripolar set. Since $P_{z_0}$  has Hausdorff codimension at
 least two, $U_{z_0}\sm  P_{z_0}$ is connected.
 Thus $P_{z_0}=\p D\cap U_{z_0}$
 and  $D\cap U_{z_0}=U_{z_0}\sm P$. Moreover, $\overline{D}=\overline{\BB^n}$.
 Now by the Josefson theorem \cite{Jo}, there is a
 plurisubharmonic function  $\rho$ defined in $\CC^n$ such that
 $L_2\subset \{\rho=-\infty\}$. Then $E:=\p
 D \cap \BB^n\subset \{\rho+\log|k_D(z,0)|=-\infty\}$.  Namely, $\p D \cap \BB^n$ is a
 pluripolar set.
 Again since a (closed) pluripolar set has Hausdorff codimension at
 least two, $\BB^n\sm \p D$ is connected which then has to be $D$ as
 $D$ is assumed to be connected. Namely, $D=\BB^n\sm E$ with $E$ is relatively closed in $\mathbb{B}^n$.
 Since any relatively closed pluripolar set  is a removable set  for $L^2$-integrable
holomorphic functions (Lemma 1, \cite{Ir}). So, the Bergman metric of $D$
is just the restriction of $\o_{\BB^n}$  on  $D$. Hence
$\ld=1$.
 This proves
 the theorem.
\end{proof}


\section{Proof of Theorem \ref{thm7.1}}



\begin{proof}[Proof of Theorem \ref{thm7.1}]
Assume that $ (\O, \omega_\Omega) $ is as in the theorem. Let
$(U,z)$ with $0\in z(U)$ be a holomorphic coordinate chart. (Still
write $U$ for $z(U)$).
Shrinking $U$ if needed, there is a  biholomorphic map  $F$ from $U$
to a neighborhood $V\subset \CC^n$ of $0$ such that $F(0)=0$ and
$F^*(\o_{eucl})= \o_\O,$ where $\o_{eucl}=i\p\ov{\p}|z|^2$ is the
Euclidean metric of $\CC^n$. Then by a similar argument as before,
 we conclude that $F$ extends
to a  biholomorphic map from $\O$ to its image $D\subset \CC^n$ with
$F^*(\o_{eucl})=\o_\O.$ Hence
$$\o_D=\o_{eucl}=i\p\ov{\p}\log(e^{|z|^2}).$$

Still let  $\{\phi_j\}_{j=1}^\infty$ be an orthonormal basis of the
Bergman space $A^2(D)$ with $\phi_1(0)>0$ and $\phi_j(0)=0$ for
$j\ge 2$. Then, as before,
$K_D(z,w)=\sum_{j=1}^{\infty}\phi_j(z)\ov{\phi_j(w)}$ and
$K_D(z,0)={\phi_1(0)}\phi_1(z).$ We thus have
\begin{equation}\label{new-01}
K_D(z,z)=|\phi_1(z)|^2e^{|z|^2}=\frac{|K_D(z,0)|^2}{K_D(0,0)}e^{|z|^2}.
\end{equation}
 Therefore,
by complexification, one has
\begin{equation}\label{4.20}
K_D(z,w)={K_D(z,0) K_D(0,w)\over K_D(0,0)}e^{\langle z, w\rangle}=
\phi_1(z)
\ov{ \phi_1(w)} e^{\langle z, w\rangle},\quad \hbox{for } z, w\in D,
 \end{equation}
where
\begin{equation}
\phi_1(z)=K_D(z,0) K_D(0,0) ^{-1/2}\quad \h{for } z\in D.
\end{equation}
Since $D$ is open and $0\in D$, there is an $r>0$ such that, the
closed ball with radius $r$, $\overline{B(0, r)}\subset D$. Notice
that
\begin{equation}\label{4.23}
|z_j|^{2m} \le C_{m} \sum_{k=0}^5 e^{r \Re (e^{i k \pi/3} z_j)}\ \
\quad \h{for }\  z\in D \ \h{and } \ 1\le j\le n
\end{equation}
and
\begin{equation}\label{4.24}
 \int_D |e^{\langle z, w\rangle } \phi_1(w)|^2 dv(w)  = {1\over |\phi_1(z)|^2}
 \int_D |K_D(z, w)|^2 dv(w)=e^{|z|^2} \quad \h{for } \ z\in D.
\end{equation}
From (\ref{4.23}) and (\ref{4.24}), it follows that
\begin{equation}\label{4.25}
\int_D |w_j^m \phi_1(w)|^{2} dv(w)\le 6 C_m e^{r^2}< \infty
\quad\hbox{for any } 1\le j\le n \hbox{ and } m\in \NN.
\end{equation}
This shows that $w^\alpha \phi_1(w) \in A^2(D)$ for each multi-index 
$\alpha$.  Therefore,
$$
z^\alpha\phi_1(z) =\int_D K_D(z,w) w^\alpha \phi_1(w) dv(w)=
\phi_1(z) \int_D e^{\langle z, w\rangle} w^\alpha  |\phi_1(w)|^2
dv(w).
$$
In particular,
\begin{equation}\label{4.26}
z^\alpha= \int_D e^{\langle z, w\rangle} w^\alpha |\phi_1(w)|^2
dv(w).
\end{equation}
Let $\{D_k\}_{k=1}^\infty$ be an increasing sequence of
smoothly bounded domains in  $D$ with $\bigcup_{k}D_k=D$. Write
$$
J_k^\alpha (z) = \int_{D_k} e^{\langle z, w\rangle} w^\alpha |\phi_1(w)|^2
dv(w)\quad \hbox{and} \quad J^\alpha (z)=\int_{D} e^{\langle z, w\rangle}
w^\alpha |\phi_1(w)|^2 dv(w)=z^\a.
$$
Notice that $\int_{D_k}  |e^{\langle z, w\rangle} \wbar^\beta w^\alpha
\phi_1(w)^2| dv(w)$ is   uniformly bounded  for  $z$ in  any compact subset of  $D$ by
(\ref{4.24}) and (\ref{4.25}) and notice that  $J^\alpha _k$ is holomorphic in $D$.
Moreover, by (\ref{4.26}) and the Lebesgue dominated convergence theorem, we have
$$J^\alpha_k(z)\to
\int_D e^{\langle z, w\rangle} w^\alpha |\phi_1(w)|^2
dv(w)=J^\alpha (z)=z^\alpha.
$$
Hence, by passing to a subsequence if needed,  we see that  $J^\alpha_k(z)$ converges to $z^\a$  uniformly on
compact subsets in $D$. Since ${\p^{|\beta|} J^\alpha_k(z)\over \p
z^\beta}=\int_{D_k} e^{\langle z, w\rangle} \wbar^\beta w^\alpha
|\phi_1(w)|^2 dv(w) $, we have  
$$
{\p^{|\beta|} J^\alpha (z)\over \p z^\beta} =\lim_{k\to\infty} {\p^{|\beta|}
J^\alpha_k(z)\over \p z^\beta},\quad z\in D.
$$
In particular, at $z=0$, one has
\begin{equation} \label{Eq1}
\alpha! \delta_{\alpha, \beta} ={\p^{|\beta|}  z^\alpha \over  \p
z^\beta }|_{z=0}={\p^{|\beta|} J^\alpha (z) \over \p z^\beta} 
|_{z=0} =\int_D \wbar^\beta w^\alpha |\phi_1(w)|^2 dv(w).
\end{equation}
This shows that $\{{z^\alpha \over \sqrt{\alpha!}}
\phi_1\}_{|\alpha|=0}^\infty$ forms an orthonormal set for $A^2(D)$.
Notice that
\begin{equation}
\sum_{|\alpha|=0}^\infty {z^\alpha \over \sqrt{\alpha!}
}{\wbar^{\alpha}\over \sqrt{\alpha!}}= \sum_{|\alpha|=0}^\infty
{z^\alpha \wbar^\alpha\over \alpha!}
=\sum_{\alpha_1=0}^\infty {z_1^{\alpha_1} \wbar^{\alpha_1}\over
{\alpha_1}!}\cdots \sum_{\alpha_n=0}^\infty {z_n^{\alpha_n}
\wbar^{\alpha_n}\over \alpha_n!} =e^{\langle z, w\rangle}.
\end{equation}
Therefore,
\begin{equation}
\sum_{|\alpha|=0}^\infty {z^\alpha \over \sqrt{\alpha!} } \phi_1(z)
{\wbar^{\alpha}\over \sqrt{\alpha!}}\phi_1(w)= e^{\langle z,
w\rangle} \phi_1(z)\phi_1(w)=K_D(z,w).
\end{equation}
Hence $\{ {z^\alpha\over \sqrt{\alpha!}}\phi_1\}$ forms an
orthonormal basis for $A^2(D)$ and $\phi_1\not =0$. 
\end{proof}

We remark that an immediate  consequence of Theorem \ref{thm7.1} is that the Bergman metric of $\O$ in Theorem \ref{thm7.1} cannot be flat when $\O$ admits a non-trivial  bounded holomorphic function. (See the earlier   version of  this paper posted  at  arXiv:2302.13456 ). However, in the Appendix provided by John Treuer,    Theorem \ref{thm7.1}, together with results from the complex  moment theory,  will show that the Bergman metric of $\O$ cannot be flat  in general.

We
construct in what follows an unbounded pseudoconvex Hartogs domain
$D\subset \CC^{n+1}$ which contains a copy of $(\CC^n, \o_{eucl})$
as a totally geodesic complex submanifold of $(D,\o_D)$.

\medskip

 Let $D$ be  the Reinhardt and Hartogs domain defined by
\begin{equation}
D=\{ z=(z, w)\in \CC^n\times \CC: |w|^2 < e^{-|z|^2} \}.
\end{equation}
Notice that $D$ is pseudoconvex as its defining function
$\rho=2\log|w|+|z|^2$ is plurisubharmonic along its boundary. Also
notice that $h(z,w)=w$ is a non-constant bounded holomorphic
function over $D$. We will show in what follows that $(\CC^n\times
\{0\}, \o_{eucl})$ is a total geodesy of $(D,\o_D)$. Namely, the
holomorphic  embedding $\iota: (\CC^n,\o_{eucl})\ra (D,\o_D)$ with
$\iota(z)=(z,0)$ is an isometric embedding. Moreover, the second
fundamental form of $\CC^n\times \{0\}$ in $D$ is identically zero,
which is equivalent to the statement that  holomorphic sectional
curvatures of $(D,\o_D)$ are zero along $\CC^n\times \{0\}$ by the
Gauss-Codazzi equation.

\medskip
We first prove that the Bergman space $A^2(D)$ is of infinite
dimension because  $\{z^\alpha w^q\}_{\a\ge 0, q\ge 0}$ forms an
orthogonal basis of $A^2(D)$. To this aim,  it suffices prove  that
\begin{equation}\label{moment-001}
\int_D|z^\alpha w^q|^2 dv={\pi^{n+1}\over (q+1)^{|\alpha|+n+1}}\alpha!<\infty,
\end{equation}
as $D$ is a complete Reinhardt domain. Indeed,
\begin{eqnarray}\label{Eq2}
\|z^\alpha w^q\|^2_{L^2}&=&\int_D |z^\alpha|^2 |w|^{2q} dv \nonumber\\
&=&\int_{\CC^n} |z^\alpha|^2 dv(z) \int_{|w|^2< e^{-|z|^2} } |w|^{2q} dv(w)\nonumber\\
&=&\pi \int_{\CC^n} |z^\alpha|^2 {1\over q+1} e^{-(q+1)|z|^2} dv(z)\nonumber\\
&=&{\pi\over (q+1)^{|\alpha|+n+1}}  \int_{\CC^n} |z^\alpha|^2 e^{-|z|^2} dv(z)\\
&=&{\pi^{n+1}\over (q+1)^{|\alpha|+n+1}}\prod_{j=1}^n \int_0^\infty r_j^{\alpha_j}
e^{-r_j} dr_j \nonumber \\
&=&{\pi^{n+1}\over (q+1)^{|\alpha|+n+1}}\alpha!.\nonumber
\end{eqnarray}
Therefore, the diagonal  Bergman kernel function $K$ of $D$  is
computed as follows:
\begin{eqnarray*}
K &=&\sum_{|(\alpha, p)|=0}^\infty {1\over \|z^\alpha w^p\|^2_{L^2(D)} } |z^\alpha|^2 |w|^{2p }\\
&=&{1\over \pi^{n+1}} \sum_{|\alpha|=0}^\infty {|z^\alpha|^2 \over \alpha!} \sum_{p=0}^\infty (p+1)^{|\alpha|+n+1} |w|^{2p} \\
&=&{1\over \pi^{n+1}}\sum_{p=0}^\infty  (p+1)^{n+1}  \sum_{|\alpha|=0}^\infty {|z^\alpha|^2 \over \alpha!} (p+1)^{|\alpha|} |w|^{2p} \\
&=&{1\over \pi^{n+1}}\sum_{p=0}^\infty  (p+1)^{n+1} e^{(p+1)|z|^2} |w|^{2p}\\
&=&{ e^{|z|^2} \over \pi^{n+1}}\sum_{p=0}^\infty  (p+1)^{n+1} \Big(
e^{|z|^2} |w|^{2}\Big)^p.
\end{eqnarray*}

When $n=1$,
$$
(x({x\over 1-x})')'={1+x\over (1-x)^3} =\sum_{p=0}^\infty (p+1)^2
x^p
$$
and thus
$$
K={e^{\|z\|^2}\over \pi^2} {1+e^{|z|^2}|w|^2 \over ( 1-e^{|z|^2}
|w|^2)^3}.
$$

For a general $n$, when $w=0$, one has
\begin{equation}
K_0={e^{|z|^2}\over \pi^{n+1}}\quad \h{for } z\in \CC^n,\h{ and }
K=K_0(1+|w|^2h(z,w)) \h{ for } (z,w)\in D,
\end{equation}
where $h(z,w)$ is a  real analytic function in $D$. Write $z_{n+1}$
for $w$ in what follows and write the Bergman metric tensor 
\begin{equation}
g_{\a\ov\b}=\frac{\p^2 \log K}{\p z_\a{\p  \ov z_\b}}=\frac{\p^2
\left(\log K_0+O(1)|w|^2\right)}{\p z_\a{\p  \ov z_\b}}
\end{equation}
with $\a,\b\in \{1,\cdots,n+1\}$. Clearly, $g_{(n+1)\ov
j}=g_{j\ov{(n+1)}}=0$ for $1\le j\le n$. Also $\frac{\partial g_{i
\ov{i}}}{\p z_k}
 =\frac{\partial g_{i\ov j}}{{\p
 \ov z_l}}=0$ when $w=0$ and $1\le i,j,k\le n$.  Hence when $w=0$, $g^{j\ov{(n+1)}}=g^{(n+1)
 \ov j}=0$ for $j\le n$. Now for $1\le i,j,k,l\le n$ and $w=0$, the Riemannian curvature
tensions $R_{i\ov{j}k\ov{l}}$ are computed as follows:
 \begin{equation}
 R_{i\ov{j}k\ov{l}}=-\frac{\partial^2 g_{i\ov j}}{\p z_k{\p
 \ov z_j}}+\sum_{\a,\b=1}^{n+1}g^{\a\ov\b}\frac{\partial g_{i\ov \b}}{\p z_k}
 \frac{\partial g_{\a\ov j}}{{\p
 \ov z_l}}=0.
 \end{equation}
 We thus conclude that when $w=0$,  holomorphic sectional curvatures of the Bergman metric of $D$ along
 directions tangent to $\CC^n\times \{0\}\subset D$ are zero.

By the Gauss-Godazzi equation for K\"ahler submanifolds (see, e.g.,
[page 33, \cite{Mo1}]), we see that the second fundamental form of
$\CC^n\times \{0\}$ in $D$ is zero.  Thus, $\CC^n\times \{0\}$ with
the flat metric is a totally geodesic submanifold in $(D,\o_D)$.

\bigskip
\bigskip

\noindent Xiaojun Huang (huangx$@$math.rutgers.edu): Department of
Mathematics, Rutgers University, New Brunswick, NJ 08903, USA

\noindent Song-Ying Li (sli$@$uci.edu): Department of Mathematics,
University of California, Irvine,  Irvine, CA 92697, USA

\eject
\vfill

\centerline {\bf  Appendix on the Proof of Theorem \ref{thm7.2}}

\medskip
\centerline{ by John N. Treuer}

In this appendix, we will provide a proof of Theorem \ref{thm7.2}. Together with Theorem \ref{thm7.1}, this will prove Corollary  \ref{thm7.3}. One of the main tools we will use is a uniqueness theorem for the moment problem.

Let $s:=(s_{\alpha \bar{\beta}})_{\alpha, \beta \in \mathbb{N}_0^n}$ be a sequence of  complex numbers with $s_{\alpha \bar{\a}}\ge 0$ for each $\a$.  
The (complex) moment problem on  $\mathbb{C}^n$ asks whether there exists a unique nonnegative Radon measure $\mu$ such that

\begin{equation}\label{complex moment problem}
s_{\alpha \overline{\beta}} = \int_{\mathbb{C}^n} z^{\alpha}\overline{z}^{\beta} d\mu(z)
:=L^{\mu}[z^\alpha \zbar^\beta], \quad \alpha, \beta \in \mathbb{N}_0^n.
\end{equation}

The moment problem is a classical problem in probability and analysis, see \cite{Sch} for more details. We will use the following uniqueness result, adapted from \cite[Theorem 15.11, p. 391]{Sch}, for the complex moment problem.  
 
\begin{theorem}\label{???-01} Suppose that there are two nonnegative Radon measures $d\mu_1$  and $d\mu_2$ such that
 $$
 L^{\mu_1}[z^\a \zbar^{\beta}]= L^{\mu_2}[z^\a \zbar^{\beta}]=\sqrt{\a!\b!}\d_{\a \b}
 \quad\hbox{ for any } \a,\b\in \mathbb{N}_0^n. 
 $$
 Then $d\mu_1= d\mu_2$.
\end{theorem}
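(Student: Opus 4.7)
The plan is to reduce the claim to the multidimensional Carleman determinacy criterion that \cite[Theorem 15.11]{Sch} provides, and then to verify its hypotheses by a direct Stirling estimate on the coordinate moments. No candidate measure (such as the Gaussian $\pi^{-n}e^{-|z|^2}dv$) need be exhibited; determinacy alone forces $d\mu_1 = d\mu_2$.

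First I would extract the relevant moments in each coordinate direction. Setting $\a=\b=ke_j$ in the hypothesis, where $e_j\in\NN_0^n$ is the $j$-th standard basis vector, yields
\begin{equation*}
L^{\mu_i}[|z_j|^{2k}] \;=\; L^{\mu_i}[z_j^{k}\,\zbar_j^{k}] \;=\; \sqrt{(k!)(k!)} \;=\; k!
\end{equation*}
for $i=1,2$, each $j=1,\ldots,n$, and every $k\in\NN_0$. Writing $z_j=x_j+\II\, y_j$ and using the pointwise bounds $x_j^{2k},\, y_j^{2k}\le |z_j|^{2k}$, this transfers to the estimate $\int x_j^{2k}\, d\mu_i \le k!$ and $\int y_j^{2k}\, d\mu_i \le k!$ for the $2n$ real coordinates of $\CC^n\cong\RR^{2n}$.

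Next I would verify Carleman's condition in each of these real coordinates. Stirling's formula gives $(k!)^{1/(2k)}\sim \sqrt{k/e}$ as $k\to\infty$, so $(k!)^{-1/(2k)}\gtrsim 1/\sqrt{k}$, and therefore
\begin{equation*}
\sum_{k=1}^{\infty}\Bigl(\int_{\CC^n} x_j^{2k}\,d\mu_i\Bigr)^{-1/(2k)} \;\ge\; \sum_{k=1}^{\infty}(k!)^{-1/(2k)} \;=\; +\infty,
\end{equation*}
and likewise with $y_j$ in place of $x_j$, for every $j$ and every $i$.

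Finally I would invoke \cite[Theorem 15.11]{Sch}: divergence of the Carleman series in every real coordinate is a sufficient condition for the moment problem on $\CC^n\cong\RR^{2n}$ to be determinate. Since $\mu_1$ and $\mu_2$ share all moments by hypothesis, determinacy forces $d\mu_1=d\mu_2$. The only delicate point is aligning the complex-moment formulation of the hypothesis with the real-coordinate form in which Carleman's criterion is usually stated, and this is precisely what the inequalities $x_j^{2k},y_j^{2k}\le|z_j|^{2k}$ accomplish; beyond that, the argument reduces to a one-line Stirling estimate.
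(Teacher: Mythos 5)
Your proof is correct and takes essentially the same route as the paper: the paper states this theorem without proof, citing it as ``adapted from'' Schm\"udgen's Theorem 15.11 (the multidimensional Carleman determinacy criterion), and your verification --- extracting the diagonal moments $k!$, transferring them to the real coordinates of $\CC^n\cong\RR^{2n}$ via $x_j^{2k},y_j^{2k}\le|z_j|^{2k}$, and checking Carleman's condition by Stirling --- is exactly the adaptation being invoked. The one step you leave implicit, that equality of all complex moments $L^{\mu_i}[z^\a\zbar^\b]$ forces equality of all real moments (so that determinacy applies), is immediate by linearity since each $x^\gamma y^\d$ lies in the span of the $z^\a\zbar^\b$.
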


\begin{proof}[Proof of Theorem \ref{thm7.2}]
Suppose that $D$ exists and $A^2(D)$ has an orthonormal basis of the form  $\left\{{z^{\alpha} \over \sqrt{\alpha!}}\phi(z)\right\}_{\alpha \in (\mathbb{N}_0)^n}$ with $\phi\not =0$ and 
$\phi\in A^2(D)$ with $\|\phi\|_{L^2(D)} = 1$. 
Define a nonnegative Radon measure on $\mathbb{C}^n$ by
\begin{equation*}
d\mu_1(z)=|\phi(z)|^2 \chi_{D}(z)dv(z),
\end{equation*}
where $\chi_D$ denotes the characteristic function of $D$. 
 By the orthonormality of the basis, 
\begin{equation}\label{M1}
L^{\mu_1}[z^\alpha \zbar^\beta]=\int_{\mathbb{C}^n} z^\alpha \zbar^\beta d\mu_1(z)=\int_{D} z^{\alpha}\overline{z}^{\beta}|\phi(z)|^2 dv(z) =
\alpha!\delta_{\alpha\beta}.
\end{equation}
 Define another positive Radon measure on $\mathbb{C}^n$ by
 \begin{equation*}
 d\mu_2(z)= \pi^{-n} e^{-|z|^2} dv(z).
 \end{equation*}
 By integrating in polar coordinates and using the definition of the Gamma function,
 \begin{eqnarray}
L^{\mu_2}[z^\alpha \zbar^\beta] = \int_{\mathbb{C}^n} z^\alpha \zbar^\beta d\mu_2(z)
 &=& 2^n\delta_{\alpha\beta}\int_{(0, \infty)^n} e^{-|z|^2}\prod_{k=1}^n|z_k|^{2\alpha_k + 1}d(|z_1|,\ldots,|z_n|)  \nonumber 
 \\
 &=& \alpha!\delta_{\alpha\beta}. \label{m1}
 \end{eqnarray}
By (\ref{M1}), (\ref{m1}) and Theorem \ref{???-01},
$$
|\phi(z)|^2\chi_D (z) dv(z) = \pi^{-n}e^{-|z|^2}dv(z).
$$
One then easily concludes that
$$
|\phi(z)|^2 \equiv \pi^{-n}e^{-|z|^2}, \quad z \in D.
$$
Taking the logarithm of both sides,
$$
2\ln|\phi(z)| = \ln(\pi^{-n}) - |z|^2, \quad z \in D.
$$
Applying the Laplace operator to both sides yields that
$
0=-4n$. This is a contradiction; therefore, $D$ does not exist.
\end{proof}

\medskip
\noindent John N. Treuer (jtreuer$@$ucsd.edu), Department of Mathematics, University of California, La Jolla, CA 92093.

\end{document}